\documentclass[]{amsart}

\usepackage{graphicx,color,hyperref}

\usepackage[initials,nobysame]{amsrefs}
\usepackage{amssymb,amsmath}
\usepackage{subcaption}
\usepackage{comment}

\usepackage{mathtools}
\mathtoolsset{showonlyrefs=true}

\numberwithin{equation}{section}

\newtheorem{theorem}{Theorem}[section]

\newtheorem{corollary}[theorem]{Corollary}
\newtheorem{lemma}[theorem]{Lemma}

\newtheorem{conjecture}[theorem]{Conjecture}
\theoremstyle{definition}

\newtheorem*{acknowledgements}{Acknowledgements}
\theoremstyle{remark}
\newtheorem{remark}[theorem]{Remark}

\newcommand{\R}{\mathbf{R}}
\newcommand{\Z}{\mathbf{Z}}

\renewcommand{\S}{\mathbf{S}}

\begin{document}

\title[Immortal area-preserving curvature flows]{Asymptotic circularity of immortal area-preserving curvature flows}

\author[T.~Miura]{Tatsuya Miura}
\address[T.~Miura]{Department of Mathematics, Graduate School of Science, Kyoto University, Kitashirakawa Oiwake-cho, Sakyo-ku, Kyoto 606-8502, Japan}
\email{tatsuya.miura@math.kyoto-u.ac.jp}

\subjclass[2020]{53E40, 53E10, 53A04, 35K55, 35B40, 35B44}
\keywords{Area-preserving curve shortening flow; surface diffusion flow; immortal solution; finite time blowup; exponential decay; isoperimetric inequality}

\begin{abstract}
For a class of area-preserving curvature flows of closed planar curves, we prove that every immortal solution becomes asymptotically circular without any additional assumptions on initial data. As a particular corollary, every solution of zero enclosed area blows up in finite time. This settles an open problem posed by Escher--Ito in 2005 for Gage's area-preserving curve shortening flow, and moreover extends it to the surface diffusion flow of arbitrary order. We also establish a general existence theorem for nontrivial immortal solutions under almost circularity and rotational symmetry.
\end{abstract}

\maketitle

\section{Introduction}

Geometric flows without maximum principles entail significant difficulties and are sometimes unexpectedly delicate to handle.
Area-preserving type curvature flows for curves are widely studied, but the absence of maximum principles often complicates their analysis.

In this paper, we present a unified theory for a class of area-preserving curvature flows, focusing on the properties of immortal solutions. 
As a specific corollary, we resolve an open problem posed by Escher--Ito in 2005 \cite[Remark 12]{EI05}:

\emph{Does every Gage's area-preserving curve-shortening flow blow up in finite time when the enclosed area is zero?}

\subsection{Area-preserving curvature flows}

Let $\gamma:\S^1\times[0,T)\to\R^2$, $(x,t)\mapsto \gamma(x,t)$, be a smooth one-parameter family of immersed closed planar curves, where $\S^1=\R/\Z$.
We call $\gamma$ an \emph{$m$-th area-preserving curvature flow}, where $m\geq0$, if it evolves according to the geometric flow equation
\begin{equation}\label{eq:m-ACF}\tag{$m$-ACF}
    \partial_t\gamma= (-1)^m\partial_s^{2m}(k-\bar{k})\nu.
\end{equation}
Here $k$ denotes the signed curvature, $\bar{k}$ the average of $k$, and $\nu$ the unit-normal, with the sign convention that a counterclockwise circle has positive $k$ and inward $\nu$.
The arclength derivative along $\gamma$ is defined by $\partial_s:=|\partial_x\gamma|^{-1}\partial_x$.

This class particularly includes two classical equations; the $m=0$ case is a nonlocal second-order equation called the \emph{area-preserving curve shortening flow} introduced by Gage in 1986 \cite{Gage86}
\begin{equation}\label{eq:ACSF}\tag{ACSF}
    \partial_t\gamma = (k-\bar{k})\nu,
\end{equation}
while all the cases of $m\geq1$ are local and of higher order; in particular, the $m=1$ case is the well-known fourth order \emph{surface diffusion flow}
\begin{equation}\label{eq:SDF}\tag{SDF}
    \partial_t\gamma = -k_{ss}\nu,
\end{equation}
introduced by Mullins in 1957 for describing certain phase-interface dynamics.
The $m\geq2$ case was also recently studied by Parkins--Wheeler \cite{PW16}, wherein it is called the \emph{polyharmonic heat flow for curves}.

The parabolicity of equations \eqref{eq:m-ACF} ensures short-time well-posedness for any given smooth initial curve $\gamma(\cdot,0)=\gamma_0$, and we can always define the maximal existence time $T\in(0,\infty]$.
In addition, both $T<\infty$ and $T=\infty$ are possible to occur.
This is in contrast to relevant equations such as the curve shortening flow $\partial_t\gamma=k\nu$ (for which always $T<\infty$), elastic flows $\partial_t\gamma=-(k_{ss}+\frac{1}{2}k^3-\lambda k)\nu$ with $\lambda\geq0$ (for which $T=\infty$) \cite{DKS02}, and Chen's flow $\partial_t\gamma=-(k_{ss}-k^3)\nu$ ($T<\infty$) \cite{CWW23}.
We say that a solution with $T=\infty$ is \emph{immortal}, and otherwise \emph{blows up in finite time}.

The class \eqref{eq:m-ACF} has the common variational properties: the length $L$ is always decreasing, while the signed area $A$ is preserved. 
Also by smoothness the rotation number $N$ is preserved.
Here,
\[
L[\gamma]:=\int_\gamma ds, \quad A[\gamma]:=-\frac{1}{2}\int_\gamma \langle \gamma,\nu \rangle ds, \quad N[\gamma]:=\frac{1}{2\pi}\int_\gamma kds.
\]
Then the curvature average is precisely given by $\bar{k}:=\frac{1}{L}\int_\gamma kds=\frac{2\pi N}{L}$.
Throughout this paper, up to reversing the parameter, we assume without loss of generality that
\[
N\geq0.
\]
Hereafter, let $A_0:=A[\gamma_0]\in\R$ denote the initial area and $N_0:=N[\gamma_0]\geq0$ the rotation number, both preserved along the flow.

\subsection{Asymptotic circularity and blowup criteria}

Our main result is asymptotic circularity of all immortal solutions to \eqref{eq:m-ACF} without any additional assumptions on initial data.

\begin{theorem}[Asymptotic circularity]\label{thm:main}
    Let $\gamma:\S^1\times[0,T)\to\R^2$ be any solution to $\eqref{eq:m-ACF}$, where $m\geq0$.
    Suppose that $T=\infty$.
    Then $\gamma$ smoothly converges to an $N_0$-fold circle exponentially fast, up to reparametrization.
    More precisely,
    \begin{align}
        & L(t)^2\geq 4\pi N_0A_0>0, \label{eq:main_1}\\
        & L(t)^2-4\pi N_0A_0\leq Ce^{-2N_0ct}, \label{eq:main_2}\\
        & \|k-\bar{k}\|_\infty(t) \leq Ce^{-\frac{1}{4}ct}, \label{eq:main_3}\\
        & \|k_{s^j}\|_\infty(t) \leq C_je^{-\frac{1}{4}ct}\quad (j\geq1), \label{eq:main_4}
    \end{align}
    and in addition there exists a limit point $P_\infty\in\R^2$ such that
    \begin{align}
        \left|\frac{1}{L}\int_\gamma \gamma ds - P_\infty\right| \leq Ce^{-\frac{1}{4}ct}. \label{eq:main_5}
    \end{align}
    Here $c=(2\pi/L_0)^{2m+2}$ and $C>0$ depends only on $m$ and $\gamma$, while $C_j>0$ also on $j\geq1$, but all independent of $t\geq0$.
\end{theorem}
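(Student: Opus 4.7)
The plan is to combine the gradient-flow structure of \eqref{eq:m-ACF}---length dissipation under area preservation---with a spectral analysis around the limiting $N_0$-fold circle. Starting from the length-dissipation identity
\[
\frac{d}{dt}L = -\int|\partial_s^m(k-\bar k)|^2\,ds,
\]
obtained by computing $\frac{d}{dt}L=-\int k\,\langle\partial_t\gamma,\nu\rangle\,ds$ and integrating by parts (using that $k-\bar k$ has zero mean and the curve is closed), one integrates in time to get $\int_0^\infty\|\partial_s^m(k-\bar k)\|_{L^2}^2\,dt<\infty$ under immortality. Coupled with the uniform higher-order smooth a priori estimates for immortal solutions (from the preceding a priori theory in the paper), an Arzel\`a--Ascoli extraction along a sequence $t_j\to\infty$ produces, after re-centring and reparametrising, a smooth subsequential limit curve with $k\equiv\bar k$: necessarily a round circle, and by preservation of rotation number an $N_0$-fold circle. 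Area preservation then forces $N_0\ge 1$ and $A_0>0$, and the isoperimetric inequality for immersed closed planar curves yields \eqref{eq:main_1}.

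To quantify the rate I work with the isoperimetric deficit $D(t):=L(t)^2-4\pi N_0 A_0\ge 0$, which is decreasing since $A$ is conserved. The Wirtinger (Poincar\'e) inequality $\int|\partial_s^m u|^2\,ds\ge(2\pi/L)^{2m}\int u^2\,ds$ for zero-mean $u$, applied to $u=k-\bar k$, gives
\[
\dot D = 2L\dot L \le -2L(2\pi/L)^{2m}\int(k-\bar k)^2\,ds.
\]
Combined with a near-circle stability estimate of the form $\int(k-\bar k)^2\,ds\gtrsim N_0 D/L$---obtained by expanding $D$ and the Dirichlet form around the limiting circle, modding out neutral directions, and exploiting that the remaining spectrum is strictly positive---this yields the closed differential inequality $\dot D\le -2N_0 c\,D$ with the stated constant $c=(2\pi/L_0)^{2m+2}$, and hence \eqref{eq:main_2} after a short continuity/bootstrap argument to propagate the near-circle regime from the subsequence to all large $t$.

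For \eqref{eq:main_3}--\eqref{eq:main_4}, the $L^2$ decay of $k-\bar k$ is promoted to $L^\infty$ decay of $k-\bar k$ and of every arclength derivative via Gagliardo--Nirenberg interpolation against the uniform smooth a priori bounds; the loss of rate from $2N_0 c$ to $c/4$ reflects the interpolation exponents. Finally, the centroid $L^{-1}\int_\gamma\gamma\,ds$ has a time derivative computable explicitly from the flow equation and integration by parts; the exponential decay of $\|k-\bar k\|_\infty$ and its derivatives makes this time-integrable, producing the limit $P_\infty$ and the rate \eqref{eq:main_5}.

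The chief obstacle is the stability estimate $\int(k-\bar k)^2\,ds\gtrsim N_0 D/L$. The second variation of $D$ at the $N_0$-fold circle has a kernel along translations (Fourier modes $n=\pm N_0$) and scaling ($n=0$, removed by area preservation), and, for $N_0\ge 2$, it has genuinely \emph{unstable} directions along the low modes $0<|n|<N_0$. To close the argument one must rule out persistence of such unstable components along any immortal trajectory: intuitively, an immortal solution of \eqref{eq:m-ACF} is forced to lie on the stable manifold of the limiting $N_0$-fold circle. This reduction---via a Lojasiewicz--Simon gradient-flow argument or a direct invariant-manifold analysis---is the conceptual and technical heart of the proof.
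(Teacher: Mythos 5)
Your overall skeleton (length dissipation $\Rightarrow$ subsequential convergence to an $N_0$-fold circle $\Rightarrow$ exponential decay of the deficit $D$ $\Rightarrow$ interpolation for higher-order and $L^\infty$ decay $\Rightarrow$ centroid convergence) matches the paper's, and the first, third and fourth paragraphs are essentially the argument of Lemma \ref{lem:main} and of the second half of the proof of Theorem \ref{thm:main}. One correction to the first paragraph: you cannot invoke ``the isoperimetric inequality for immersed closed planar curves'' to get \eqref{eq:main_1}, because $L^2\geq 4\pi NA$ is \emph{false} for general immersed curves when $N\geq 2$ (the lima\c{c}on is a counterexample, as the paper stresses). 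What actually yields \eqref{eq:main_1} is that the ratio $I(t)=L(t)^2/(4\pi A_0)$ is monotone decreasing along the flow and its subsequential limit is $N_0$, so $I(t)\geq N_0$ for all $t$.

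The genuine gap is the step you yourself flag as the ``conceptual and technical heart'': the lower bound $\int(k-\bar k)^2\,ds\gtrsim N_0D/L$. You propose to obtain it by a spectral expansion of $D$ and the Dirichlet form around the limiting circle, and then to handle the unstable low modes $0<|n|<N_0$ (for $N_0\geq2$) by a {\L}ojasiewicz--Simon or invariant-manifold argument. None of this is carried out, and the invariant-manifold route would be a substantial undertaking for a nonlocal or higher-order quasilinear flow. The paper avoids this entirely: the needed inequality is
\[
\frac{4\pi^2N}{L^2}\,|D|\;\leq\;K_{osc}=L\int(k-\bar k)^2\,ds,
\]
which is a \emph{general} estimate valid for every closed immersed curve (quoted from Nagasawa--Nakamura, \eqref{eq:NN}), requiring no near-circle regime, no removal of neutral directions, and no control of unstable modes. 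Combined with the Poincar\'e--Wirtinger chain \eqref{eq:PW} and $D\geq0$ (already secured qualitatively by Lemma \ref{lem:main}), it immediately closes the differential inequality $D'\leq-2N_0cD$ pointwise in time, with no continuity or bootstrap argument needed. This substitution of a universal curve inequality for a dynamical stability analysis is precisely what makes the paper's proof work for multiply covered limits and for $A_0$ not assumed nonzero a priori; as written, your proposal does not contain a proof of the estimate it rests on.
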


\begin{remark}
    Theorem \ref{thm:main} particularly implies that the \emph{parametric isoperimetric inequality} $L^2\geq 4\pi NA$ holds along any immortal solution.
    Recall that this inequality may not hold for general curves (see Figure \ref{fig:limacon}).
    This issue often presents significant challenges in the analysis of geometric flows for multiply winding curves, both inherently and technically, see e.g.\ \cite{Chou03,WK14,MO21}.
\end{remark}

\begin{figure}[htbp]
    \centering
    \includegraphics[width=0.2\linewidth]{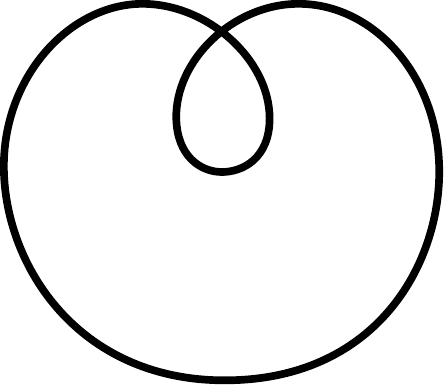}
    \caption{Lima\c{c}on: Counterexample to the parametric isoperimetric inequality.}
    \label{fig:limacon}
\end{figure}

In particular, the contrapositive of \eqref{eq:main_1} directly implies general blowup criteria in terms of initial data only.
Let $L_0:=L[\gamma_0]>0$, the initial length.

\begin{corollary}[Blowup criteria]\label{cor:blowup}
    Let $m\geq0$.
    If either $N_0=0$, $A_0\leq0$, or $L_0^2<4\pi N_0A_0$, then $T<\infty$, i.e., the solution to \eqref{eq:m-ACF} blows up in finite time.
\end{corollary}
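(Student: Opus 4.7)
The plan is to proceed by contraposition directly from \eqref{eq:main_1} of Theorem \ref{thm:main}. Suppose $T = \infty$; then Theorem \ref{thm:main} asserts the compound inequality $L(t)^2 \geq 4\pi N_0 A_0 > 0$ for every $t \geq 0$. I would simply verify that each of the three hypotheses of the corollary forces this chain to fail, yielding the desired contradiction.

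For the first two cases, $N_0 = 0$ and $A_0 \leq 0$, the quantity $4\pi N_0 A_0$ is already non-positive (using the convention $N_0 \geq 0$ from the introduction), which is incompatible with the strict positivity $4\pi N_0 A_0 > 0$ regardless of what $L(t)$ does. For the third case, $L_0^2 < 4\pi N_0 A_0$, I would invoke the length-monotonicity already listed among the common variational properties of the class \eqref{eq:m-ACF}: since $L(t)$ is nonincreasing, one has $L(t)^2 \leq L_0^2 < 4\pi N_0 A_0$ for every $t \in [0,\infty)$, contradicting the lower bound $L(t)^2 \geq 4\pi N_0 A_0$ from \eqref{eq:main_1}. (One may alternatively simply evaluate \eqref{eq:main_1} at $t=0$ to reach the same contradiction.)

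There is no substantive obstacle here: all the analytic content has been absorbed into Theorem \ref{thm:main}, and the corollary is a one-line logical consequence of reading that theorem's conclusion as a necessary condition on $(N_0, A_0, L_0)$ for immortality. The only mild care needed is to notice that the inequality in \eqref{eq:main_1} is a compound statement asserting both $L(t)^2 \geq 4\pi N_0 A_0$ \emph{and} $4\pi N_0 A_0 > 0$, so that negating either conjunct suffices to rule out $T=\infty$.
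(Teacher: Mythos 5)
Your proposal is correct and matches the paper's own route: the text explicitly states that the corollary is the contrapositive of \eqref{eq:main_1} (proved as Lemma \ref{lem:main}), and your three-case check, including evaluating at $t=0$ or using length monotonicity for the third case, is exactly the intended one-line deduction.
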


\begin{remark}\label{rem:blowup_parameter}
    For $m=0$, our result completely characterizes ``blowup-only parameters'' in the sense that if a parameter $(L_0,A_0,N_0)$ does not meet the assumptions of Corollary \ref{cor:blowup}, or equivalently if $L_0^2\geq4\pi^2N_0A_0>0$, then we can construct an immortal solution to \eqref{eq:ACSF} with the given parameter $(L_0,A_0,N_0)$ as initial data.
    For $m\geq1$, we expect a similar result, but this remains an open problem --- see Section \ref{subsec:existence_immportal} for details.
\end{remark}

Most notably, our blowup criteria fully cover the zero-area case, i.e.,
\[
A_0=0 \Longrightarrow T<\infty,
\]
which was largely underexplored in previous works.
In particular, this result settles Escher--Ito's problem on Gage's flow \eqref{eq:ACSF} \cite[Remark 12]{EI05}.
Not only that, our zero-area criterion is new even for \eqref{eq:SDF} as well as $m\geq2$, whenever $N_0\geq2$.

\begin{itemize}
    \item Case \eqref{eq:ACSF} ($m=0$): If $N_0=0$, then $\bar{k}=0$ and the equation reduces to the standard curve shortening flow, and hence $T<\infty$ (see also \cite{Grayson89}).
    The blowup criteria $A_0<0$ and $L_0^2<4\pi N_0^2A_0^2$ are obtained in \cite[Proposition 9]{EI05}, inspired by \cite{Chou03}.
    The case $A_0=0$ with $N_0\geq1$ was open and nothing but Escher--Ito's problem \cite[Remark 12]{EI05}.
    A partial result towards this problem was obtained in \cite[Theorem 1.1]{WWY18}, which shows that if a locally convex solution has zero area, then either $T<\infty$ or it shrinks to a point as $t\to T=\infty$.
    \item Case \eqref{eq:SDF} ($m=1$):
    The first blowup criterion $N_0=0$ was obtained by Polden \cite[Proposition 3.0.1]{Polden1996} (see also \cite{EGMWW15}).
    Then, Chou found that if either $A_0<0$ or $L_0^2<4\pi N_0^2A_0^2$, then $T<\infty$ \cite[Proposition B]{Chou03}.
    Later, Escher--Ito claimed that if $A_0=0$, then $T=\infty$ for any $N_0\geq1$ \cite[Proposition 11]{EI05}.
    However, their proof is actually only valid for $N_0=1$, as we will demonstrate in Remark \ref{rem:weakly_non-convex}.
    Consequently, the case $A_0=0$ with $N_0\geq2$ has remained open until now.
    \item Case \eqref{eq:m-ACF} ($m\geq2$): To the author's knowledge there are no known criteria, but Polden's criterion $N_0=0$ and Escher--Ito's criterion $A_0=0$ with $N_0=1$ directly extend to this case (see Remarks \ref{rem:blowup_zero_rotation} and \ref{rem:weakly_non-convex}).
\end{itemize}
Our result solves these open cases, and in addition recovers all the above criteria through an independent argument.

The main reason why the zero-area case remained elusive is as follows:
In the $A_0\neq0$ case, all the existing arguments for blowup criteria rely on independently obtained smooth convergence results.
These convergence results are fundamentally based on the $A_0\neq0$ assumption, as it directly implies the non-degeneracy of length along the flow thanks to the isoperimetric inequality $L^2\geq 4\pi|A|=4\pi|A_0|>0$.
This property is essential to proceed with any interpolation-based energy methods \cite{DKS02,Chou03,W13,PW16,NN19,nakamura2019large,NN21}.
For this reason, in the $A_0=0$ case, all the prior arguments focused on directly obtaining finite life-span estimates under stronger assumptions than just $A_0=0$ (as in Remarks \ref{rem:blowup_zero_rotation} and \ref{rem:weakly_non-convex}).

Our argument here overcomes this subtle point by breaking away from the common methodologies discussed above. 
The key idea is a simple yet previously unnoticed change in perspective: we carefully select a sequence of steps, and directly establish ``lower order circularity'' of immortal solutions.
As for exponential smooth convergence, similar results are discussed in \cite{NN19,NN21,Chou03,Wheeler22,nakamura2019large}, all assuming $A_0\neq0$ as explained above, though each may involve additional assumptions.
Theorem \ref{thm:main} unifies these results by a different approach, not only removing the assumption $A_0\neq0$ (see Section \ref{subsec:exp_convergence} for detailed comparisons).

\subsection{Existence of immortal solutions}

Finally we discuss the existence of nontrivial immortal solutions, to which Theorem \ref{thm:main} is indeed applicable.

In addition to Corollary \ref{cor:blowup}, we can easily deduce that $T<\infty$ holds even if $L_0^2=4\pi N_0A_0$ unless the initial curve is an $N_0$-fold circle (so necessarily $N_0\geq2$), which is the only stationary solution.
Hence $L_0^2>4\pi N_0A_0>0$ is necessary for nontrivial immortal solutions.
In this case, it is known that under various additional assumptions, there indeed exist nontrivial immortal solutions for \eqref{eq:m-ACF} or part thereof
\cite{Gage86,Dittberner21,Dittberner2022distance,GP23,WK14,EG97,EMS98,MO21,W13,PW16} (see Section \ref{subsec:existence_immportal} for details).

For completeness, we prove an existence theorem for all \eqref{eq:m-ACF}, particularly extending results in \cite{EG97,W13,MO21,PW16}.
Here the main assumptions on initial curves are closeness to circles and, if $N_0\geq2$, suitable rotational symmetry.
The symmetry assumption is not removable since multiply-covered circles are dynamically unstable.
This idea was used in \cite{MO21} (inspired by \cite{Chou03}) for the first rigorous proof of the existence of a numerically conjectured solution by Escher--Mayer--Simonett \cite{EMS98}.

To state the theorem we recall the scale-invariant \emph{curvature oscillation}:
\[
    K_{osc}:=L\int_\gamma(k-\bar{k})^2ds = L\int_\gamma k^2ds - 4\pi^2 N^2.
\]
In addition, we define the \emph{Abresch--Langer class} $A_{\ell,n}$ \cite{AL86} as all curves $\gamma:\S^1\to\R^2$ with $N[\gamma]=\ell$ such that $\gamma(x+\frac{1}{n})=R_{2\pi\ell/n}\gamma(x)$ holds for $x\in\S^1$, where $R_\theta$ denotes the counterclockwise rotation matrix through angle $\theta$ (see also Figure \ref{fig:area_zero}).

\begin{figure}[htbp]
    \centering
    \includegraphics[width=0.8\linewidth]{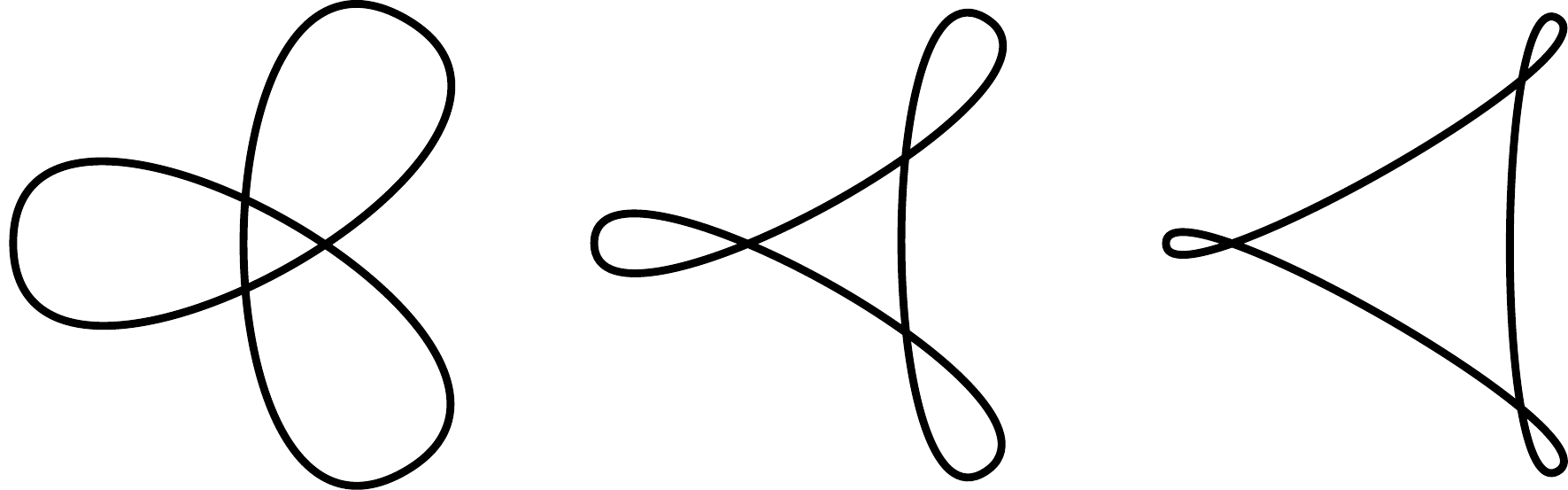}
    \caption{Trefoil: Locally convex curves of class $A_{2,3}$.}
    \label{fig:area_zero}
\end{figure}

\begin{theorem}[Existence of immortal solutions]\label{thm:rotationary_symmetric}
    Let $\gamma:\S^1\times[0,T)\to\R^2$ be a solution to $\eqref{eq:m-ACF}$, where $m\geq0$.
    Suppose that the initial curve $\gamma_0$ satisfies
    \[
    K_{osc}[\gamma_0]\leq \varepsilon_1, \quad L_0^2 \leq (1+\varepsilon_2)4\pi N_0 A_0,
    \]
    where $\varepsilon_1,\varepsilon_2>0$ are constants depending only on $m$ and $N_0$,
    and in addition that $\gamma_0\in A_{N_0,n}$ for some $n\geq N_0$.
    Then $T=\infty$, i.e., the solution from $\gamma_0$ is immortal.
\end{theorem}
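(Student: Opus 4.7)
My plan is a continuation argument on the symmetric class: show that the smallness hypotheses are propagated in time, and that uniform smallness implies all higher-order bounds needed to preclude blowup.

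The first step is to observe that \eqref{eq:m-ACF} is $\so(2)$-equivariant and reparametrization-invariant, so uniqueness of smooth solutions forces $A_{N_0,n}$ to be preserved along the flow (apply uniqueness to $t\mapsto R_{-2\pi N_0/n}\gamma(\cdot+\tfrac{1}{n},t)$). Consequently $k-\bar{k}$ is $(L/n)$-periodic in arclength and has zero mean, so its arclength-Fourier expansion is supported on nonzero multiples of $n$. By Parseval this gives, for the symmetric class, the Wirtinger-type inequality
\begin{equation*}
\int_\gamma\bigl(\partial_s^j(k-\bar{k})\bigr)^2\,ds\geq\left(\frac{2\pi n}{L}\right)^{2(j-i)}\int_\gamma\bigl(\partial_s^i(k-\bar{k})\bigr)^2\,ds,\qquad 0\leq i\leq j,
\end{equation*}
and the hypothesis $n\geq N_0$ provides the spectral gap that kills the low-wavenumber modes at which an $N_0$-fold circle would be linearly unstable.

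The second step is a bootstrap. From $\tfrac{dL^2}{dt}=-2L\int(\partial_s^m(k-\bar{k}))^2\,ds$ and $A\equiv A_0>0$, the scale-normalized deficit $D:=L^2-4\pi N_0 A$ is nonincreasing, so $L(t)^2$ is trapped in $[4\pi N_0 A_0,\,L_0^2]$ and $\bar{k}=2\pi N_0/L$ is uniformly two-sided bounded. Set $T^*:=\sup\{t\in[0,T)\colon K_{osc}(\tau)\leq 2\varepsilon_1 \text{ for all } \tau\leq t\}$. A direct computation of $\tfrac{d}{dt}K_{osc}$ via $\partial_t k=V_{ss}+k^2V$ and $\partial_t\,ds=-kV\,ds$ with $V=(-1)^m\partial_s^{2m}(k-\bar{k})$, followed by integrations by parts, yields the schematic form
\begin{equation*}
\tfrac{d}{dt}K_{osc}\leq -2L\int_\gamma\bigl(\partial_s^{m+1}(k-\bar{k})\bigr)^2\,ds+R,
\end{equation*}
where $R$ is a sum of integrals of monomials in $k-\bar{k}$ and its arclength derivatives, each of total degree $\geq 3$ in $k-\bar{k}$ and involving at most $2m+2$ derivatives. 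Standard Gagliardo--Nirenberg interpolation between $\|k-\bar{k}\|_2$ and $\|\partial_s^{m+1}(k-\bar{k})\|_2$, together with the Wirtinger gap from Step~1 (taken with $i=0$, $j=m+1$) and the two-sided bounds on $L$, lets one absorb $R$ into the leading dissipation once $K_{osc}\leq 2\varepsilon_1$ is sufficiently small. This gives the strict differential inequality $\tfrac{d}{dt}K_{osc}\leq -cK_{osc}$ on $[0,T^*)$ and hence $K_{osc}(t)<2\varepsilon_1$ throughout, forcing $T^*=T$.

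The final step is to upgrade the bounds. Uniform control of $L$ and $K_{osc}$ bounds $\int_\gamma k^2\,ds$, and then standard higher-order interpolation estimates for \eqref{eq:m-ACF} in the spirit of \cite{DKS02,PW16} propagate this to uniform bounds on every $\|\partial_s^j k\|_\infty$, ruling out blowup so that $T=\infty$. The main obstacle is the absorption step in the bootstrap: the remainder $R$ contains precisely the nonlinear contributions that drive the instability of multiply covered circles without symmetry, and only the spectral factor $(2\pi n/L)^{2(m+1)}$ gained from $n\geq N_0$ is strong enough to dominate them. Matching this factor quantitatively against the interpolation constants is what determines the explicit $m$- and $N_0$-dependence of the thresholds $\varepsilon_1,\varepsilon_2$.
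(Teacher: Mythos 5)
Your first step (preservation of $A_{N_0,n}$ by uniqueness) and your improved Wirtinger inequality for the symmetric class are both correct, but the argument has two genuine gaps, and the second is at the very heart of the theorem. First, you assert that since $D=L^2-4\pi N_0A$ is nonincreasing, $L(t)^2$ is trapped in $[4\pi N_0A_0, L_0^2]$. Monotonicity of $D$ only gives the \emph{upper} bound $L(t)^2\leq L_0^2$ (and, with the hypothesis on $\gamma_0$, $D(t)\leq \varepsilon_2\cdot 4\pi N_0A_0$); the \emph{lower} bound $L(t)^2\geq 4\pi N_0A_0$ is equivalent to $D(t)\geq0$, i.e.\ to the parametric isoperimetric inequality, which fails for general immersed curves (the lima\c{c}on) and is precisely what the symmetry hypothesis is there to supply, via \cite[Theorem~1.1]{MO21}. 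The paper's proof consists almost entirely of this one point: symmetry is preserved, hence $L(t)\geq\sqrt{4\pi N_0A_0}$ by that isoperimetric theorem, hence $L_0/\inf_t L(t)\leq\sqrt{1+\varepsilon_2}$, and then Lemma \ref{lem:global_existence} applies. You have silently assumed the conclusion of the key external input.

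Second, your absorption mechanism does not close in the borderline case $n=N_0$, which the statement allows and which includes the most basic setting $N_0=1$, $n=1$ (no symmetry at all). The dangerous term in $\frac{d}{dt}K_{osc}$ is the \emph{quadratic} contribution $2L\bar{k}^2\|(k-\bar{k})_{s^m}\|_2^2=\frac{8\pi^2N_0^2}{L}\|(k-\bar{k})_{s^m}\|_2^2$, which cannot be absorbed by smallness of $K_{osc}$; your spectral gap bounds it by $\frac{2N_0^2}{n^2}\cdot 2L\|k_{s^{m+1}}\|_2^2$ against the dissipation $2L\|k_{s^{m+1}}\|_2^2$, leaving the coefficient $2-2N_0^2/n^2$, which vanishes when $n=N_0$. (This is the neutral translation/lowest-mode direction; killing it requires the closure constraint, e.g.\ \eqref{eq:NN}, not the symmetry.) So the strict inequality $\frac{d}{dt}K_{osc}\leq -cK_{osc}$ does not follow on $[0,T^*)$ in general. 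The paper avoids this entirely by observing that the quadratic term is the exact derivative $-8\pi^2N_0^2\frac{d}{dt}\log L$, so that $K_{osc}+8\pi^2N_0^2\log L$ is (up to the good dissipation and absorbable cubic terms) a Lyapunov quantity; its total drift is then $8\pi^2N_0^2\log\bigl(L_0/\inf_t L(t)\bigr)\leq 8\pi^2N_0^2\log\sqrt{1+\varepsilon_2}$, controlled by choosing $\varepsilon_2$ small --- which again is exactly where the isoperimetric length lower bound is needed. For $n>N_0$ your spectral-gap route would yield a genuine (and in that regime stronger, unconditional exponential decay) alternative, but as written it neither justifies the length lower bound nor covers $n=N_0$.
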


Our result is new when $N_0\geq2$, except for the $m=1$ case treated in \cite{MO21}.
The above class of initial curves always contains non-convex curves.
Taking smaller $\varepsilon_1$ if necessary, we may remove the lower order assumption involving $\varepsilon_2$.

This paper is organized as follows.
In Section \ref{sec:prelim} we prepare notation and recall fundamental facts, and then discuss some known arguments for blowup to clarify how the zero-area case is delicate.
In Section \ref{sec:condition_immortality} we mainly prove Theorem \ref{thm:main}, and also Theorem \ref{thm:rotationary_symmetric} in order, with more precise comparisons to previous results.

\begin{acknowledgements}
    The author would like to thank \mbox{Fuya} \mbox{Hiroi} and \mbox{Shinya} \mbox{Okabe} for fruitful discussions, and in particular letting him know the technical gap of Escher--Ito's proof with Hiroi's counterexample.
    The author is also grateful to \mbox{Glen} \mbox{Wheeler} for continuous discussions.
    This work is supported by JSPS KAKENHI Grant Numbers JP21H00990, JP23H00085, and JP24K00532.
\end{acknowledgements}

\section{Preliminaries}\label{sec:prelim}

We first prepare notation.
For $f:\S^1\times[0,T)\to\R$ we denote $\|f\|_p:=\int_\gamma|f|^pds=\int_{\S^1}|f|^p|\gamma_x|dx$ for $p\in(1,\infty)$, while $\|f\|_\infty:=\max_{\S^1}|f|$, at each time slice $t\in[0,T)$.
We will often drop the integral domain $\gamma$ if it is clear.

We then recall standard general formulae for the equation of the form
\[
\partial_t\gamma =-F\nu.
\]
For the arclength measure $ds:=|\partial_x\gamma|dx$ we directly compute
\begin{align*}
    ds_t &= \frac{\langle\gamma_{xt},\gamma_{x}\rangle}{|\gamma_x|}dx = \frac{\langle(-F\nu)_x,\gamma_{x}\rangle}{|\gamma_x|}dx = -F\frac{|\gamma_x|\langle-k\tau,\gamma_{x}\rangle}{|\gamma_x|}dx = Fkds.
\end{align*}
For other computations it is convenient to introduce the commutator
\[
    [\partial_t,\partial_s]
    = -Fk\partial_s.
\]
For example, applying this to the tangent vector $\tau:=\gamma_s$ as well as $\nu$ we compute
\begin{align*}
\tau_t
&= \partial_t\partial_s \gamma = [\partial_t,\partial_s] \gamma + \partial_s\partial_t\gamma = -Fk\tau + (-F\nu)_s = -F_s\nu,\\
\nu_t &= F_s\tau.
\end{align*}
Now, using our equations \eqref{eq:m-ACF} where
\[
F = (-1)^{m+1}(k-\bar{k})_{s^{2m}}, 
\]
we are already able to compute the derivative formulae for the area,
\begin{align*}
    \frac{d}{dt}A &= -\frac{1}{2}\int (\langle \gamma_t,\nu \rangle + \langle \gamma,\nu_t \rangle) ds - \frac{1}{2}\int \langle \gamma,\nu \rangle ds_t\\
    & = -\frac{1}{2}\int (-F +  F_s\langle \gamma,\tau \rangle + F\langle \gamma,k\nu\rangle ) ds \\
    &= -\frac{1}{2}\int (-F -  F\langle \gamma,\gamma_{s} \rangle_s + F\langle \gamma,\gamma_{ss}\rangle ) ds \\
    & = \int Fds = (-1)^{m+1}\int (k-\bar{k})_{s^{2m}} ds = 0,
\end{align*}
as well as for the length
\begin{align*}
    \frac{d}{dt}L = \int ds_t &= (-1)^{m+1}\int k(k-\bar{k})_{s^{2m}}ds \\
    &= (-1)^{m+1}\int (k-\bar{k})(k-\bar{k})_{s^{2m}} ds \\
    &= -\int (k-\bar{k})_{s^m}^2ds = -\|(k-\bar{k})_{s^m}\|_2^2 \leq0.
\end{align*}
The same formulae are also obtained in \cite{PW16,nakamura2019large}.
Notice in particular that along the flow \eqref{eq:m-ACF} we always have
\[
0\leq L(t)\leq L_0, \quad A(t)=A_0, \quad N(t)=N_0.
\]

We also recall the following elementary $L^2$-estimates
\begin{equation}\label{eq:PW}
    \|(k-\bar{k})_{s^m}\|_2^2\leq \frac{L^2}{4\pi^2}\|(k-\bar{k})_{s^{m+1}}\|_2^2 \quad (m\geq0),
\end{equation}
or equivalently, 
\[
\|k-\bar{k}\|_2^2\leq \frac{L^2}{4\pi^2}\|k_s\|_2^2 \quad\text{and}\quad \|k_{s^m}\|_2^2\leq \frac{L^2}{4\pi^2}\|k_{s^{m+1}}\|_2^2\quad (m\geq1).
\]
These are easily obtained by the Poincar\'e--Wirtinger inequality since each $(k-\bar{k})_{s^m}$ is $L$-periodic and average-free.
We also have $L^\infty$-counterparts \cite[Corollary 2.3]{W13}
\begin{equation}\label{eq:PW_infty}
    \|(k-\bar{k})_{s^m}\|_\infty^2\leq \frac{L}{2\pi}\|(k-\bar{k})_{s^{m+1}}\|_2^2 \quad (m\geq0).
\end{equation}

We now observe that Polden's proof \cite{Polden1996} for \eqref{eq:SDF} also directly works for any \eqref{eq:m-ACF} with $m\geq0$.

\begin{remark}[Blowup of solutions with $N_0=0$]\label{rem:blowup_zero_rotation}
    Suppose that $N_0=0$.
    Then $\bar{k}=0$.
    By \eqref{eq:PW} and the derivative formula for $L$, we have
    \[
    L'=-\int (k-\bar{k})_{s^m}^2ds \leq -\left(\frac{4\pi^2}{L^2}\right)^m\int (k-\bar{k})^2ds = -\left(\frac{4\pi^2}{L^2}\right)^m\int k^2ds.
    \]
    Now we use the Cauchy--Schwarz inequality and Fenchel's theorem,
    \[
    \int k^2ds \geq \frac{1}{L}\left(\int|k|ds\right)^2 \geq \frac{4\pi^2}{L},
    \]
    to obtain $L' \leq -(4\pi^2)^{m+1}L^{-(2m+1)}$
    or finally $(L^{2m+2})'\leq -(2m+2)(4\pi^2)^{m+1}$. 
    This implies finite time blowup $T<\infty$.
\end{remark}

We also observe that Eschler--Ito's argument \cite{EI05} for weakly non-convex solutions to \eqref{eq:SDF} also works for any \eqref{eq:m-ACF} with $m\geq1$ (but not for $m=0$).
Here a solution $\gamma:\S^1\times[0,T)\to\R^2$ is said to be \emph{weakly non-convex} if the curvature has a zero along the flow, i.e., for any $t\in[0,T)$ there is $x_t\in\S^1$ such that $k(x_t,t)=0$.
A solution is not weakly non-convex if and only if it is strictly locally convex at some time.

\begin{remark}[Blowup of weakly non-convex solutions]\label{rem:weakly_non-convex}
    Let $m\geq1$.
    By \eqref{eq:PW} and the derivative formula for $L$, we have
    \[
    L'=-\int k_{s^m}^2ds \leq -\left(\frac{4\pi^2}{L^2}\right)^{m-1}\int k_s^2ds.
    \]
    If the solution is weakly non-convex, then a version of the Wirtinger inequality (which does not require $k$ to be average-free) implies that $\|k\|_2^2\leq\frac{L^2}{\pi^2}\|k_s\|_2^2$.
    Hence $T<\infty$ follows similarly to Remark \ref{rem:blowup_zero_rotation}.

    Escher--Ito \cite[Proposition 11]{EI05} (also \cite[Lemma 2.2]{Wheeler22}) claimed for $N_0\geq1$ that if $A_0=0$ then the solution is always weakly non-convex.
    This claim is indeed valid for $N_0=1$, since every locally convex curve with $N_0=1$ is embedded, so the area must be positive.
    However, the $N_0\geq2$ case has counterexamples; there exist strictly locally convex curves with zero or negative area (see Figure \ref{fig:area_zero}), which the author learned about through personal communication with Fuya Hiroi.
    Such examples are also previously known in different contexts, see e.g.\ \cite{AL86,WK14}.
\end{remark}

We will anyway not use the above criteria, and our independent argument also works for those cases.

Finally, we record the well-known fact that if a solution to \eqref{eq:m-ACF} blows up in finite time ($T<\infty$), then the $L^2$-norm of the curvature blows up as follows:
\begin{equation}\label{eq:blowup_rate}
    T<\infty \Longrightarrow \int_\gamma k^2ds \geq \frac{C}{(T-t)^{\frac{1}{2m+2}}}.
\end{equation}
In particular, if this $L^2$-norm remains bounded on $[0,T)$, then the solution is immortal ($T=\infty$).
The case of $m=1$ is first obtained in \cite[Theorem 3.1]{DKS02}.
See also \cite[Proposition 5]{EI05} and \cite[Theorem 3.2]{NN21} for $m=0$, and \cite[Lemma 11]{PW16}
for $m\geq2$.

\section{Immortal solutions}\label{sec:condition_immortality}

In this section we prove all the theorems in the introduction.

\subsection{Asymptotic isoperimetry}\label{subsec:asymptotic_circularity}

We first prove \eqref{eq:main_1} and also a (qualitative) convergence of the isoperimetric defect.

\begin{lemma}\label{lem:main}
    Let $\gamma:\S^1\times[0,T)\to\R^2$ be any solution to $\eqref{eq:m-ACF}$, where $m\geq0$.
    Suppose that $T=\infty$.
    Then $N_0>0$ and $A_0>0$ need to hold, and $L(t)$ monotonically decreases to $\sqrt{4\pi N_0A_0}$ as $t\to\infty$.
    In addition, there is a sequence $t_j\to\infty$ such that $K_{osc}(t_j)\to0$ as $j\to\infty$.
\end{lemma}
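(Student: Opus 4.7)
The plan rests on a single algebraic identity that controls the parametric isoperimetric defect $L^2 - 4\pi N_0 A_0$ directly by the length dissipation, after which the lemma reduces to a short dichotomy on $(N_0, A_0)$. From $L' = -\|(k-\bar k)_{s^m}\|_2^2$ (recorded in Section~\ref{sec:prelim}), $L(t)$ is monotone non-increasing with some limit $L_\infty \in [0,L_0]$ and $\int_0^\infty \|(k-\bar k)_{s^m}\|_2^2\,dt = L_0 - L_\infty < \infty$. Choosing $t_j \to \infty$ with $\|(k-\bar k)_{s^m}\|_2(t_j) \to 0$ and iterating \eqref{eq:PW} together with $L \le L_0$ yields $\|k-\bar k\|_2(t_j) \to 0$, hence $K_{osc}(t_j) = L(t_j)\|k-\bar k\|_2^2(t_j) \to 0$, which already settles the last claim of the lemma.

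Next I would prove the identity
\[
\frac{L^2 - 4\pi N_0 A_0}{L} \;=\; -\int_\gamma (k - \bar k)\langle \gamma - P, \nu\rangle \, ds \qquad (P \in \R^2 \text{ arbitrary}),
\]
by integrating $(d/ds)\langle \gamma,\tau\rangle = 1 + k\langle \gamma,\nu\rangle$ around the closed curve (which yields $\int k\langle\gamma,\nu\rangle\,ds = -L$), combining with $\int\langle\gamma,\nu\rangle\,ds = -2A_0$ and $\bar k = 2\pi N_0/L$, and using $\int(k-\bar k)\nu\,ds = 0$ to permit the free shift by $P$. Taking $P = \gamma(x_0,t)$ on the curve gives the diameter bound $\|\gamma - P\|_\infty \le L/2$, after which Cauchy--Schwarz followed by iterated \eqref{eq:PW} upgrades the identity to the master estimate
\[
(L^2 - 4\pi N_0 A_0)^2 \;\le\; -\,\frac{L^{2m+5}}{4(2\pi)^{2m}}\,L'.
\]

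The remaining conclusions follow by a dichotomy on $N_0 A_0$. If $N_0 A_0 \le 0$ (in particular if $N_0 = 0$ or $A_0 \le 0$), then $L^2 - 4\pi N_0 A_0 \ge L^2$, so the master estimate reduces to $-L^{2m+1} L' \ge c > 0$; integrating gives $L^{2m+2}(t) \le L_0^{2m+2} - ct$, forcing $L$ to vanish in finite time and contradicting $T = \infty$. Thus $N_0 \ge 1$ and $A_0 > 0$. Evaluating the identity along the times $t_j$, with $L \le L_0$, gives $L(t_j)^2 \to 4\pi N_0 A_0$, and the monotonicity of $L$ pins down $L_\infty = \sqrt{4\pi N_0 A_0}$.

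The delicate step is the identity itself: writing $L^2 - 4\pi N_0 A_0$ as a pairing of $k - \bar k$ against $\langle\gamma - P,\nu\rangle$ is exactly what allows the $L^2$-dissipation of the flow to control the defect, without any detour through higher regularity or geometric compactness of the evolving curves. This is the hinge that makes the argument pass uniformly in $(N_0, A_0)$ --- in particular through the $A_0 = 0$ regime, where the classical isoperimetric inequality $L^2 \ge 4\pi|A|$ yields no useful length bound.
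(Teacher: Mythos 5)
Your proof is correct, but it takes a genuinely different route from the paper's once past the (identical) first step of extracting $t_j\to\infty$ with $K_{osc}(t_j)\to0$ from the $L^1$-integrability of $-L'=\|(k-\bar k)_{s^m}\|_2^2$. The paper proceeds by geometric compactness: it rescales $\gamma(\cdot,t_j)$ to unit length, uses the smallness of $K_{osc}$ to show via the tangential angle that the normalized curves converge in $C^1$ to an $N_0$-fold circle, and reads off $N_0>0$ (a segment cannot be a $C^1$-limit of closed curves), $A_0>0$, and $L^2/(4\pi A_0)\searrow N_0$ from the limit object. You instead stay entirely at the level of integral identities: your identity $D/L=-\int_\gamma(k-\bar k)\langle\gamma-P,\nu\rangle\,ds$ is correct under the paper's conventions (it follows from $\int k\langle\gamma,\nu\rangle\,ds=-L$, $\int\langle\gamma,\nu\rangle\,ds=-2A$, and $\int(k-\bar k)\nu\,ds=0$), and combined with $\|\gamma-P\|_\infty\le L/2$, Cauchy--Schwarz, and iterated \eqref{eq:PW} it gives the master estimate $(L^2-4\pi N_0A_0)^2\le \tfrac14(2\pi)^{-2m}L^{2m+5}(-L')$; the dichotomy on the sign of $N_0A_0$ then forces $L^{2m+2}$ to decrease at a definite rate and hence to vanish in finite time unless $N_0A_0>0$, and evaluating the bound along $t_j$ pins down $L_\infty=\sqrt{4\pi N_0A_0}$. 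In effect you have rederived a signless, quadratic variant of the Nagasawa--Nakamura estimate \eqref{eq:NN} and observed that it already suffices to establish the positivity of $N_0$ and $A_0$, whereas the paper only deploys \eqref{eq:NN} later, in Lemma \ref{lem:initial_exp_decay}, after positivity and $D\ge0$ are known. Your route is more elementary and quantitative --- the same differential inequality simultaneously handles $N_0=0$, $A_0\le0$, and (since $D$ is non-increasing) $L_0^2<4\pi N_0A_0$ without any compactness or rescaling --- while the paper's route supplies the geometric picture of the rescaled $C^1$-limit, which the rest of the argument does not actually need. Both are valid proofs of the lemma.
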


\begin{proof}
    Since $0 \leq L(t)\leq L_0$ and $L'=-\|(k-\bar{k})_{s^{m}}\|_{2}^2$, we have $\|(k-\bar{k})_{s^{m}}\|_{2}^2\in L^1(0,\infty)$ and hence there is a sequence $t_j\to\infty$ such that
    \[
    \|(k-\bar{k})_{s^{m}}\|_{2}(t_j)\to0.
    \]
    In particular, using \eqref{eq:PW} we have
    \[
    K_{osc} \leq L\left(\frac{L^2}{4\pi^2}\right)^m \|(k-\bar{k})_{s^{m}}\|_{2}^2 \leq L_0\left(\frac{L_0^2}{4\pi^2}\right)^m \|(k-\bar{k})_{s^{m}}\|_{2}^2, 
    \] 
    and thus, regardless of the value of $m\geq0$, 
    \begin{equation}\label{eq:K_osc_decay}
        K_{osc}(t_j)\to0.
    \end{equation}

    Let $\tilde{\gamma}$ be the arclength reparametrization of $\gamma$, and let 
    \[
    \eta_j(x):=\frac{1}{L}\tilde{\gamma}(Lx,t_j), \quad x\in \S^1,
    \]
    be the normalized curve of unit length $L[\eta_j]=1$.
    Note that $N[\eta_j]=N_0$.
    The scale-invariance of $K_{osc}$ and \eqref{eq:K_osc_decay} imply that the curvature $k^{\eta_j}$ of $\eta_j$ satisfies
    \begin{equation}\label{eq:rescale_decay}
        \|k^{\eta_j}-2\pi N_0\|_2^2 = K_{osc}[\eta_j] \to0.
    \end{equation}
    Let $\theta^{\eta_j}:[0,1]\to\R$ denote the (smooth) tangential angle function.
    Then $\eta_j'=\partial_s\eta_j=(\cos\theta^{\eta_j},\sin\theta^{\eta_j})$ and $(\theta^{\eta_j})'=\partial_s\theta^{\eta_j}=k^{\eta_j}$.
    Up to shifting the parameter we may assume $\theta^{\eta_j}(0)=0$.
    Define
    \[
    f_j(x):=\theta^{\eta_j}(x)-2\pi N_0x.
    \]
    Then by \eqref{eq:rescale_decay} we obtain $f_j'\to0$ in $L^2(0,1)$, and also by $|f_j(x)|\leq |f(0)|+\int_0^x|f_j'| \leq (\int_0^1|f_j'|^2)^{1/2}$ we further deduce that 
    \[
    \sup_{x\in[0,1]}|f_j(x)| \to 0.
    \]
    Hence the shifted curve $\hat{\eta}_j:=\eta_j-\eta_j(0)$ converges to $\eta_\infty$ defined by
    \[
    \eta_\infty(x):=\int_0^x(\cos{2\pi N_0z},\sin{2\pi N_0z})dz
    \]
    in the $C^1$-topology on $\S^1$ (since $\hat{\eta}_j(0)=\eta_\infty(0)$ and $\hat{\eta}_j'\to \eta_\infty'$ uniformly).

    If $N_0=0$, then $\eta_\infty$ is a segment, which is impossible as a $C^1$-limit of closed curves; therefore, we necessarily have
    \begin{equation}\label{eq:rot_number_nonzero}
        N_0>0.
    \end{equation}
    Thus $\eta_\infty$ is nothing but an $N_0$-fold circle of unit length, meaning that 
    $
    A[\eta_\infty]=\frac{1}{4\pi N_0}>0.
    $
    By $C^1$-convergence we have $A[\eta_j]=A[\hat{\eta}_j]\to A[\eta_\infty]$, so in particular
    \[
        A[\eta_j]>0 \quad (j\gg1).
    \]
    Also, the isoperimetric ratio
    $
    I := \frac{L^2}{4\pi A} 
    $
    can now be well-defined for $\eta_j$ with all large $j$, and satisfies
    \[
    I[\eta_j] \to N_0.
    \]
    
    Since the above two properties are preserved under rescaling (and other geometric invariances like parameter-shifts), we can go back to $\gamma$ to deduce that
    \[
    A(t_j)=A[\gamma(\cdot,t_j)]>0 \quad (j\gg1), \quad I(t_j)=I[\gamma(\cdot,t_j)]\to N_0.
    \]
    In particular, since the area is preserved along the flow $\gamma$,
    \begin{equation}\label{eq:area>0}
        A_0=A(t)>0 \quad (t\geq 0),
    \end{equation}
    and since the isoperimetric ratio is decreasing, we have the monotone convergence
    \begin{equation}\label{eq:iso_def}
        I(t) = \frac{L(t)^2}{4\pi A_0} \searrow N_0 \quad (t\to\infty).
    \end{equation}
    The proof is now complete with \eqref{eq:K_osc_decay}, \eqref{eq:rot_number_nonzero}, \eqref{eq:area>0}, and \eqref{eq:iso_def}.
\end{proof}

Corollary \ref{cor:blowup} is a direct consequence of Lemma \ref{lem:main}.

\subsection{Exponential smooth convergence}\label{subsec:exp_convergence}

Now we discuss exponential smooth convergence, completing the proof of Theorem \ref{thm:main}.
Since we are interested in immortal solutions, hereafter we assume that
\[
N_0\geq1.
\]

Our argument here is based on a curvature-oscillation approach, as in previous works \cite{W13, PW16}. 
However, we need to adapt the argument, as those works contain technical gaps in deriving the initial exponential convergence. 
Instead, we begin by recalling the general estimate for closed curves given in \cite[Lemma 2.5]{NN21},
\begin{equation}\label{eq:NN}
    \frac{4\pi^2 N}{L^2}|D| \leq K_{osc},
\end{equation}
where, here and throughout, $D$ denotes the \emph{isoperimetric defect}
\begin{equation}
    D:= L^2-4\pi NA.
\end{equation}
We use this estimate to establish the exponential decay of $D$.

\begin{lemma}\label{lem:initial_exp_decay}
    For any immortal solution to \eqref{eq:m-ACF} with $m\geq0$, the estimate
    \[
    0 \leq D(t) \leq D(0)e^{-2N_0ct}
    \]
    holds for $t\geq0$, where $c:=(2\pi/L_0)^{2m+2}$.
\end{lemma}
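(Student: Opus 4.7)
The plan is to compute $\frac{d}{dt}D$ and show it is controlled by $-2N_0 c D$, so that Grönwall closes the estimate. Since both $A$ and $N$ are preserved along \eqref{eq:m-ACF}, we have $D' = 2L L' = -2L\|(k-\bar k)_{s^m}\|_2^2$. Nonnegativity $D(t)\geq 0$ comes for free from Lemma \ref{lem:main}, which guarantees $N_0\geq 1$, $A_0>0$, and $L(t)^2\geq 4\pi N_0 A_0$ for every immortal solution, so that the estimate \eqref{eq:NN} of Nakamura may be applied with $|D|=D$.

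First I would lower-bound the derivative-norm by the curvature oscillation. Iterating the Poincar\'e--Wirtinger-type inequality \eqref{eq:PW} $m$ times yields
\[
\|(k-\bar k)_{s^m}\|_2^2 \;\geq\; \left(\frac{4\pi^2}{L^2}\right)^m\|k-\bar k\|_2^2 \;=\; \left(\frac{4\pi^2}{L^2}\right)^m\frac{K_{osc}}{L}.
\]
Next, plugging \eqref{eq:NN} in the form $K_{osc}\geq \frac{4\pi^2 N_0}{L^2}D$ (using $D\geq 0$ and $N=N_0$), one obtains
\[
\|(k-\bar k)_{s^m}\|_2^2 \;\geq\; \frac{(4\pi^2)^{m+1}N_0\, D}{L^{2m+3}}.
\]

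Combining, and then using the monotonicity $L(t)\leq L_0$ to convert $L$ into the time-independent constant $L_0$, we get
\[
D' \;\leq\; -\frac{2(4\pi^2)^{m+1}N_0\, D}{L^{2m+2}} \;\leq\; -\frac{2(4\pi^2)^{m+1}N_0}{L_0^{2m+2}}\, D \;=\; -2N_0 c\, D,
\]
since $c=(2\pi/L_0)^{2m+2}=(4\pi^2)^{m+1}/L_0^{2m+2}$. Grönwall's inequality then yields $D(t)\leq D(0)e^{-2N_0 c t}$, completing the proof.

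I do not anticipate a serious obstacle: the computation is essentially linear once one assembles the three pieces (the derivative formula for $L$, the chain of Poincar\'e--Wirtinger estimates \eqref{eq:PW}, and the Nakamura-type estimate \eqref{eq:NN}). The only subtle point is the sign of $D$, but this has already been settled in Lemma \ref{lem:main}, so the argument is clean.
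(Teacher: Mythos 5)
Your proposal is correct and follows essentially the same route as the paper: both compute $D' = 2LL' = -2L\|(k-\bar k)_{s^m}\|_2^2$, iterate \eqref{eq:PW}, invoke \eqref{eq:NN} together with $D\geq 0$ from Lemma \ref{lem:main}, use $L\leq L_0$, and close with Gr\"onwall. (Only a minor attribution note: \eqref{eq:NN} is cited from Nagasawa--Nakamura, not Nakamura alone.)
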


\begin{proof}
    Lemma \ref{lem:main} ensures $D\geq0$, and also $N_0\geq1$.
    We compute $D' = 2LL' = -2L\|(k-\bar{k})_{s^m}\|_2^2$.
    Using \eqref{eq:PW} and \eqref{eq:NN} as well as $L\leq L_0$, we estimate
    \begin{align}
        D' \leq -2L\left(\frac{4\pi^2}{L^2}\right)^m\|k-\bar{k}\|_2^2 = -2\left(\frac{4\pi^2}{L^2}\right)^m K_{osc} \leq -2N_0cD.
    \end{align}
    This implies the desired exponential decay of $D$.
\end{proof}

For proving the exponential decay of higher order quantities, we proceed with the following calculations for the general flow $\partial_t\gamma=-F\nu$, where we define $\kappa:=\gamma_{ss}$:
\begin{align*}
\kappa_t
&= [\partial_t,\partial_s] \tau + \partial_s\partial_t \tau
= -Fk\kappa + (-F_s\nu)_s
= -(F_{ss} + Fk^2)\nu + F_sk\tau,\\
k_t &= \langle\kappa,\nu\rangle_t
= -(F_{ss} + Fk^2).
\end{align*}
In particular, for \eqref{eq:m-ACF} with
$F = (-1)^{m+1}(k-\bar{k})_{s^{2m}}$,
\begin{align*}
k_t &= (-1)^m\left( k_{s^{2m+2}} + k^2(k-\bar{k})_{s^{2m}} \right).
\end{align*}
Then
\begin{align*}
    \frac{d}{dt}\int(k-\bar{k})^2ds &= 2\int (k-\bar{k}) k_t ds + \int (k-\bar{k})^2 ds_t\\
    &= 2(-1)^m\int_\gamma (k-\bar{k}) \left( k_{s^{2m+2}} + k^2(k-\bar{k})_{s^{2m}} \right) ds\\
    & \qquad + (-1)^{m+1}\int (k-\bar{k})^2 k(k-\bar{k})_{s^{2m}} ds\\
    &= 2(-1)^m\int (k-\bar{k}) k_{s^{2m+2}} ds \\
    & \qquad + (-1)^m\int  \big( (k-\bar{k})^3+3\bar{k}(k-\bar{k})^2\big) (k-\bar{k})_{s^{2m}} ds \\
    & \qquad \qquad + (-1)^m2\bar{k}^2 \int (k-\bar{k}) (k-\bar{k})_{s^{2m}} ds.
\end{align*}
Letting
\[
Q:= (k-\bar{k})^3+3\bar{k}(k-\bar{k})^2
\]
and integrating by parts, we obtain
\[
\frac{d}{dt}\int(k-\bar{k})^2ds = -2\int k_{s^{m+1}}^2ds  + \int Q_{s^m}(k-\bar{k})_{s^{m}} ds +  2\bar{k}^2\int(k-\bar{k})_{s^m}^2ds.
\]
(This reproduces \cite[Lemma 6]{PW16} fixing typos.)
Then we obtain an estimate for the curvature oscillation as follows:
\begin{align}
    \frac{d}{dt}K_{osc}&= \frac{dL}{dt}\int(k-\bar{k})^2ds + L\frac{d}{dt}\int(k-\bar{k})^2ds \notag\\
    &\leq L\frac{d}{dt}\int(k-\bar{k})^2ds \notag\\
    &= -2L\int k_{s^{m+1}}^2ds  + L\int Q_{s^m}(k-\bar{k})_{s^{m}} ds -  8\pi^2N_0^2\frac{d}{dt}\log{L}. \label{eq:K_osc}
\end{align}

Now, in order to estimate the term involving $Q$, we recall a well-known interpolation estimate of Dziuk--Kuwert--Sch\"atzle style \cite{DKS02}:

\begin{lemma}\label{lem:interpolation}
    Let $P_\nu^{\mu,\lambda}=P_\nu^{\mu,\lambda}(k-\bar{k})$ denote a linear combination (with universal coefficients) of terms of the form $\partial_s^{i_1}(k-\bar{k})\cdots \partial_s^{i_\nu}(k-\bar{k})$, where $\mu=i_1+\dots+i_\nu\geq2$ and $\lambda=\max\{i_1,\dots,i_\nu\}+1$.
    Then
    \[
    \int |P_\nu^{\mu,\lambda}| ds \leq C(\mu,\nu,\lambda) L^{1-\mu-\nu} (K_{osc})^{\frac{1}{2}(\nu-\gamma)}\left(L^{2\lambda+1}\int k_{s^\lambda}^2ds\right)^{\frac{1}{2}\gamma}
    \]
    where $\gamma=(\mu+\frac{1}{2}\nu-1)/\lambda$, and $C(\mu,\nu,\lambda)>0$ depends only on $\mu,\nu,\lambda$.
\end{lemma}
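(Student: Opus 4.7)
Writing $f:=k-\bar{k}$ for the average-free, $L$-periodic curvature oscillation, the integrand $P_\nu^{\mu,\lambda}$ is by linearity a finite sum of monomials $\prod_{r=1}^\nu \partial_s^{i_r}f$, so it suffices to bound a single such monomial with constants depending only on $\mu,\nu,\lambda$. The approach is the standard Dziuk--Kuwert--Schätzle interpolation strategy: combine Hölder's inequality in the product with a Gagliardo--Nirenberg inequality for each factor, and finally translate the $L^2$-norms of $f$ and $\partial_s^\lambda f$ into $K_{osc}$ and $\|k_{s^\lambda}\|_2$.

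\textbf{Step 1: Hölder.} I apply the $\nu$-fold Hölder inequality with the uniform choice of exponents $p_r=\nu$ (which satisfies $\sum 1/p_r=1$), to get
\[
\int |P_\nu^{\mu,\lambda}|\,ds \leq C(\nu)\prod_{r=1}^\nu \|\partial_s^{i_r}f\|_\nu.
\]
Using the uniform exponent $\nu$ uniformly bypasses any case distinction between factors with $i_r=0$ and those with $i_r\geq1$; since $\mu=\sum i_r\geq 2$ and $\lambda=\max i_r+1\geq 2$, the interpolation exponents will all land in $[0,1]$.

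\textbf{Step 2: Gagliardo--Nirenberg on closed curves.} For each factor I invoke the standard Gagliardo--Nirenberg interpolation for average-free $L$-periodic functions. By rescaling the curve to unit length and using the mean-zero Poincaré--Wirtinger framework on $\S^1$, one obtains
\[
\|\partial_s^{i_r}f\|_\nu \leq C(\mu,\nu,\lambda)\,\|f\|_2^{1-\theta_r}\,\|\partial_s^\lambda f\|_2^{\theta_r},\qquad \theta_r=\frac{i_r+\tfrac12-\tfrac{1}{\nu}}{\lambda},
\]
\emph{without} any explicit factor of $L$: dimensional analysis shows that the $L$-exponent picked up by each term exactly cancels. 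This is precisely where the average-free assumption on $f=k-\bar{k}$ is used; if $\bar{k}$ were not subtracted, the constant could not be made $L$-independent in the sharp form required. Multiplying the factor estimates and summing exponents gives $\sum(1-\theta_r)=\nu-\gamma$ and $\sum\theta_r=\gamma$, where $\gamma=(\mu+\tfrac{\nu}{2}-1)/\lambda$ by design.

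\textbf{Step 3: Translation to $K_{osc}$ and $k_{s^\lambda}$.} I substitute $\|f\|_2^2=K_{osc}/L$ and $\partial_s^\lambda f=k_{s^\lambda}$ (valid since $\lambda\geq 1$), and rewrite $\|k_{s^\lambda}\|_2^\gamma = L^{-(2\lambda+1)\gamma/2}\bigl(L^{2\lambda+1}\int k_{s^\lambda}^2\,ds\bigr)^{\gamma/2}$. The total $L$-exponent is then
\[
-\tfrac{1}{2}(\nu-\gamma)-\tfrac{1}{2}(2\lambda+1)\gamma = -\tfrac{\nu}{2}-\lambda\gamma = 1-\mu-\nu,
\]
upon using the defining relation $\lambda\gamma=\mu+\tfrac{\nu}{2}-1$, which matches the claimed exponent exactly.

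\textbf{Expected difficulty.} The only nontrivial point is the scale-invariant Gagliardo--Nirenberg inequality on a closed curve of length $L$ for mean-zero functions, with the correct dimensional cancellation; this is classical but must be invoked carefully. All remaining steps are bookkeeping of exponents, and a direct verification shows the algebra matches the stated bound. Essentially the same computation appears in \cite{DKS02,PW16}; the present statement adapts it to the average-free setting, which streamlines the proof by eliminating lower-order curvature terms that would otherwise need to be absorbed separately.
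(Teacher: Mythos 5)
Your proof is correct and is essentially the paper's own argument: the paper establishes this lemma only by pointing to \cite[Proposition~2.5]{DKS02} (see also \cite[Lemma~21]{PW16}) with $\varkappa$ replaced by $(k-\bar{k})\nu$, and your H\"older plus Gagliardo--Nirenberg plus scaling bookkeeping is precisely that proof, with the average-free property of $k-\bar{k}$ correctly identified as what allows the lower-order terms on the right-hand side to be absorbed into the single product $\|f\|_2^{1-\theta_r}\|\partial_s^{\lambda}f\|_2^{\theta_r}$. One caveat: the Gagliardo--Nirenberg step with exponent $p_r=\nu$ needs $p_r\geq 2$, i.e.\ $\nu\geq 2$ --- for $\nu=1$ the interpolation parameter $\theta=(\mu-\tfrac{1}{2})/\lambda$ falls below the admissible threshold $\mu/\lambda$, and the stated bound in fact fails on a single high-frequency Fourier mode ($f=\varepsilon\sin(2\pi n s/L)$ gives a left/right ratio of order $n^{1/2}$) --- so the lemma carries the implicit hypothesis $\nu\geq 2$ exactly as in \cite{DKS02}; since every application in the paper has $\nu\geq 2$, nothing downstream is affected.
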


\begin{remark}
    The proof of Lemma \ref{lem:interpolation} is almost same as \cite[Proposition 2.5]{DKS02}; the only difference is that we replace $\varkappa$ ($=k\nu$) with $(k-\bar{k})\nu$ (see also \cite[Lemma 21]{PW16}).
    Recall also that this $P$-style term obeys the elementary rules
    \[
    P_\nu^{\mu,\lambda}P_{\nu'}^{\mu',\lambda'}=P_{\nu+\nu'}^{\mu+\mu',\max\{\lambda,\lambda'\}}, \quad \partial_sP_\nu^{\mu,\lambda}=P_\nu^{\mu+1,\lambda+1}.
    \]
\end{remark}

Using this lemma, we obtain a key estimate for the curvature oscillation.

\begin{lemma}\label{lem:K_osc_estimate}
    Let $\gamma:\S^1\times[0,T)\to\R^2$ be any solution to $\eqref{eq:m-ACF}$, where $m\geq0$ and $N_0\geq1$.
    Then there are $c_1,c_2>0$ depending only on $m$ such that
    \[
    \frac{d}{dt}(K_{osc}+8\pi^2N_0^2\log{L}) + L\left( 2-c_1K_{osc}-c_2N_0\sqrt{K_{osc}} \right)\|k_{s^{m+1}}\|_2^2 \leq 0.
    \]
\end{lemma}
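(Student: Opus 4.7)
The plan is to start from inequality \eqref{eq:K_osc} derived just before the statement, which after rearrangement reads
\[
\frac{d}{dt}\bigl(K_{osc}+8\pi^2 N_0^2\log L\bigr) + 2L\|k_{s^{m+1}}\|_2^2 \le L\int Q_{s^m}(k-\bar{k})_{s^m}\,ds,
\]
and to absorb the right-hand side into the good term at the cost of a prefactor $L(c_1 K_{osc}+c_2 N_0\sqrt{K_{osc}})\|k_{s^{m+1}}\|_2^2$. Expanding $Q=(k-\bar{k})^3+3\bar{k}(k-\bar{k})^2$ by the Leibniz rule (with $\bar{k}$ constant in $s$), every summand of $Q_{s^m}(k-\bar{k})_{s^m}$ is either a product of four factors $\partial_s^{\bullet}(k-\bar{k})$ with total derivative count $2m$ and individual order at most $m$ (a $P_4^{2m,m+1}$ term), or $3\bar{k}$ times an analogous product of three factors (a $3\bar{k}\,P_3^{2m,m+1}$ term).

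For $m\ge 1$, Lemma \ref{lem:interpolation} applies since $\mu=2m\ge 2$. With $\lambda=m+1$, it yields $\gamma=(2m+1)/(m+1)$ for the $P_4$ family and $\gamma=(4m+1)/(2(m+1))$ for the $P_3$ family. I would introduce the scale-invariant ratio $r:=K_{osc}/(L^{2m+3}\|k_{s^{m+1}}\|_2^2)$; a direct count of the scaling exponents then rewrites the two interpolation bounds as
\[
\int|P_4^{2m,m+1}|\,ds \le C\,r^{1/(2(m+1))}\,K_{osc}\|k_{s^{m+1}}\|_2^2,
\]
\[
3\bar{k}\int|P_3^{2m,m+1}|\,ds \le C'N_0\,r^{3/(4(m+1))}\,\sqrt{K_{osc}}\|k_{s^{m+1}}\|_2^2,
\]
where $C,C'$ depend only on $m$ and the second estimate uses $\bar{k}=2\pi N_0/L$. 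Iterating \eqref{eq:PW} exactly $m+1$ times gives $K_{osc}\le (L^{2m+3}/(2\pi)^{2(m+1)})\|k_{s^{m+1}}\|_2^2$, so $r\le(2\pi)^{-2(m+1)}$, whence $r^{1/(2(m+1))}\le(2\pi)^{-1}$ and $r^{3/(4(m+1))}\le(2\pi)^{-3/2}$, pinning down admissible constants $c_1,c_2$ that depend only on $m$.

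The case $m=0$ must be patched separately, since Lemma \ref{lem:interpolation} is stated only for $\mu\ge 2$. Here the remainder equals $L\int(k-\bar{k})^4ds+3\bar{k}L\int(k-\bar{k})^3ds$, and I would estimate each by pulling out $\|k-\bar{k}\|_\infty$ and applying the $L^\infty$ Poincar\'e--Wirtinger inequality \eqref{eq:PW_infty}: the quartic term yields $\tfrac{L}{2\pi}K_{osc}\|k_s\|_2^2$, and the cubic term, after one further use of the scalar Poincar\'e bound $\sqrt{K_{osc}/L}\le\tfrac{L}{2\pi}\|k_s\|_2$ to convert one factor of $\sqrt{K_{osc}}$, becomes $c_2N_0L\sqrt{K_{osc}}\|k_s\|_2^2$ with $c_2$ universal.

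The main obstacle I anticipate is purely the bookkeeping of scaling exponents: one has to verify that the stray powers of $L$ produced by the interpolation cancel exactly against those extracted by Poincar\'e, leaving only a universal, $m$-dependent constant in front of $K_{osc}\|k_{s^{m+1}}\|_2^2$ and $N_0\sqrt{K_{osc}}\|k_{s^{m+1}}\|_2^2$. Beyond this dimension count and the separate treatment required for $m=0$, no delicate analytical step is needed.
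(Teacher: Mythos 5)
Your proposal is correct and follows essentially the same route as the paper: decompose $Q_{s^m}(k-\bar{k})_{s^m}$ into $P_4^{2m,m+1}+\bar{k}P_3^{2m,m+1}$, apply Lemma \ref{lem:interpolation} with $\lambda=m+1$ (yielding the same exponents $\gamma$), and absorb the excess fractional powers of $K_{osc}$ using the iterated Poincar\'e inequality \eqref{eq:PW}, with $\bar{k}=2\pi N_0/L$ producing the $N_0\sqrt{K_{osc}}$ term. The only deviation is your separate elementary treatment of $m=0$ via \eqref{eq:PW_infty}; the paper applies the interpolation lemma uniformly for all $m\geq0$ (including $\mu=2m=0$), so your patch is not part of the paper's argument but is harmless and arguably more faithful to the lemma's stated hypothesis on $\mu$.
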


\begin{proof}
    Hereafter $C>0$ denotes a constant depending only on $m$ and may change line by line.
    Since $Q=(k-\bar{k})^3+3\bar{k}(k-\bar{k})^2=P_{3}^{0,1} + \bar{k}P_{2}^{0,1}$, we have
    \[
    Q_{s^m}(k-\bar{k})_{s^{m}}=(P_{3}^{m,m+1} + \bar{k}P_{2}^{m,m+1})P_1^{m,m+1}=P_{4}^{2m,m+1} + \bar{k}P_{3}^{2m,m+1}.
    \]
    By using Lemma \ref{lem:interpolation} with $\nu=4$, $\mu=2m$, $\lambda=m+1$, where $\gamma=2-\frac{1}{m+1}$, and then using $K_{osc}\leq CL^{2m+3}\|k_{s^{m+1}}\|_2^2$ obtained from \eqref{eq:PW}, we deduce that
    \begin{align*}
        \int |P_{4}^{2m,m+1}| ds &\leq CL^{-2m-3}(K_{osc})^{1+\frac{1}{2m+2}}\left(L^{2m+3}\int k_{s^{m+1}}^2ds\right)^{1-\frac{1}{2m+2}}\\
        & \leq CL^{-2m-3}K_{osc}\left(L^{2m+3}\int k_{s^{m+1}}^2ds\right).
    \end{align*}
    Similarly, by using Lemma \ref{lem:interpolation} and \eqref{eq:PW}, 
    we obtain
    \begin{align*}
        \bar{k}\int |P_{3}^{2m,m+1}| ds &\leq \bar{k}CL^{-2m-2}(K_{osc})^{\frac{1}{2}+\frac{3}{4m+4}}\left(L^{2m+3}\int k_{s^{m+1}}^2ds\right)^{1-\frac{3}{4m+4}}\\
        & \leq CN_0L^{-2m-3}\sqrt{K_{osc}}\left(L^{2m+3}\int k_{s^{m+1}}^2ds\right).
    \end{align*}
    In summary, we establish the estimate of the form
    \[
    \int Q_{s^m}(k-\bar{k})_{s^m} \leq C(K_{osc}+N_0\sqrt{K_{osc}})\|k_{s^{m+1}}\|_2^2.
    \]
    Inserting this into \eqref{eq:K_osc} completes the proof.
\end{proof}

Now we obtain a sufficient condition for $K_{osc}$-smallness preservation, which automatically ensures immortality, extending similar results in \cite{W13,PW16,MO21}.

\begin{lemma}\label{lem:global_existence}
    Let $\gamma:\S^1\times[0,T)\to\R^2$ be any solution to $\eqref{eq:m-ACF}$, where $m\geq0$ and $N_0\geq1$.
    Then there is a constant $\bar{\varepsilon}_1=\bar{\varepsilon}_1(m,N_0)>0$ with the following property:
    For any $\varepsilon_1\in(0,\bar{\varepsilon}_1]$ there exists $\varepsilon_2=\varepsilon_2(m,N_0,\varepsilon_1)>0$ such that, if at some $t_0\in[0,T)$
    \begin{equation}\label{eq:epsilon_small}
        K_{osc}(t_0)\leq \varepsilon_1, \qquad L(t_0)\leq (1+\varepsilon_2)\inf_{t\in[t_0,T)}L(t),
    \end{equation}
    then $T=\infty$ and $K_{osc}(t)\leq 2\varepsilon_1$ holds for all $t\geq t_0$.
\end{lemma}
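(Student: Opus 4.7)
The approach is a standard bootstrap/continuity argument driven by the differential inequality of Lemma \ref{lem:K_osc_estimate}. I would first choose $\bar\varepsilon_1=\bar\varepsilon_1(m,N_0)$ so small that $c_1K+c_2N_0\sqrt{K}\le 1$ whenever $K\le 2\bar\varepsilon_1$; then for any $\varepsilon_1\in(0,\bar\varepsilon_1]$, Lemma \ref{lem:K_osc_estimate} reduces, on the set where $K_{osc}\le 2\varepsilon_1$, to the clean monotonicity
$$
\frac{d}{dt}\bigl(K_{osc}+8\pi^2N_0^2\log L\bigr)\le -L\,\|k_{s^{m+1}}\|_2^2\le 0.
$$

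Next I would run the continuity argument starting from $t_0$. Let $t^*:=\sup\{t\in[t_0,T):K_{osc}(s)\le 2\varepsilon_1\text{ for all }s\in[t_0,t]\}$; since $K_{osc}(t_0)\le \varepsilon_1$ and $K_{osc}$ is continuous in $t$, we have $t^*>t_0$. On $[t_0,t^*]$ I integrate the monotonicity above and use the hypothesis $L(t_0)\le(1+\varepsilon_2)L(t)$ to obtain
$$
K_{osc}(t)\le K_{osc}(t_0)+8\pi^2N_0^2\log\frac{L(t_0)}{L(t)}\le \varepsilon_1+8\pi^2N_0^2\log(1+\varepsilon_2).
$$
Choosing $\varepsilon_2=\varepsilon_2(m,N_0,\varepsilon_1)$ small enough that $8\pi^2N_0^2\log(1+\varepsilon_2)\le \varepsilon_1/2$ yields the strict bound $K_{osc}(t)\le \tfrac{3}{2}\varepsilon_1<2\varepsilon_1$ throughout $[t_0,t^*]$. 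This strict inequality, together with continuity, rules out $t^*<T$, so $K_{osc}\le 2\varepsilon_1$ on the entire interval $[t_0,T)$.

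Finally, I would upgrade this a priori estimate to immortality via the blowup rate \eqref{eq:blowup_rate}. The same monotonicity also yields the lower length bound
$$
L(t)\ge L(t_0)\,\exp\!\Bigl(-\tfrac{2\varepsilon_1}{8\pi^2N_0^2}\Bigr)
$$
on $[t_0,T)$, while $\bar k=2\pi N_0/L$ and the identity $\int k^2\,ds = K_{osc}/L+4\pi^2N_0^2/L$ then give a uniform upper bound on the $L^2$-norm of $k$. This contradicts \eqref{eq:blowup_rate} unless $T=\infty$.

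The step I expect to be the main obstacle is calibrating $\bar\varepsilon_1$ and $\varepsilon_2$ correctly so that the bootstrap closes with a strict inequality. The nonlinear term coming from $Q$ in Lemma \ref{lem:K_osc_estimate} could in principle overwhelm the principal dissipative term $-2L\|k_{s^{m+1}}\|_2^2$; the smallness threshold on $K_{osc}$ is what keeps that contribution subordinate, while the length-defect hypothesis on $\varepsilon_2$ is what prevents the gauge term $8\pi^2N_0^2\log L$ from drifting upward by more than $\varepsilon_1/2$. Both ingredients are essential, and removing either breaks the closure of the argument.
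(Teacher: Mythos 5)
Your argument is essentially the paper's proof: the same smallness threshold coming from the root of $2-c_1x-c_2N_0\sqrt{x}$, the same continuity/bootstrap on the maximal interval where $K_{osc}\le 2\varepsilon_1$, the same calibration $8\pi^2N_0^2\log(1+\varepsilon_2)\le\varepsilon_1/2$ closing the bootstrap at $\tfrac32\varepsilon_1<2\varepsilon_1$, and the same upgrade to $T=\infty$ via the blowup rate \eqref{eq:blowup_rate}.

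One step is stated backwards, though it does not damage the proof. The monotonicity of $K_{osc}+8\pi^2N_0^2\log L$ cannot yield a \emph{lower} bound on $L$: since $K_{osc}\ge 0$, integrating it from $t_0$ only gives $8\pi^2N_0^2\log\bigl(L(t)/L(t_0)\bigr)\le K_{osc}(t_0)\le\varepsilon_1$, i.e.\ an \emph{upper} bound $L(t)\le L(t_0)e^{\varepsilon_1/(8\pi^2N_0^2)}$, which is vacuous because $L$ is already decreasing. The lower length bound you need to control $\int k^2\,ds=(K_{osc}+4\pi^2N_0^2)/L$ should instead be read off directly from the second hypothesis in \eqref{eq:epsilon_small}, which gives $\inf_{[t_0,T)}L\ge L(t_0)/(1+\varepsilon_2)>0$; this is exactly how the paper concludes. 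With that substitution your proof is complete and coincides with the paper's.
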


\begin{proof}
    Let $c_1,c_2>0$ be as in Lemma \ref{lem:K_osc_estimate}.
    Let $x^*>0$ be the first positive root of $2-c_1x-c_2N_0\sqrt{x}$, and let 
    \[
    \bar{\varepsilon}_1:=\frac{1}{2}x^*.
    \]
    Now we assume \eqref{eq:epsilon_small} with any $\varepsilon_1\in(0,\bar{\varepsilon}_1]$ (and $\varepsilon_2>0$ specified later).
    Let $T'\in(0,T]$ be the maximal time such that $K_{osc}\leq 2\varepsilon_1$ holds on $[t_0,T')$.
    Then in particular $K_{osc}\leq x^*$ on $[t_0,T')$, so by integrating the estimate in Lemma \ref{lem:K_osc_estimate} we obtain
    \[
    K_{osc}(t) \leq K_{osc}(t_0) + 8\pi^2N_0^2\log\frac{L(0
    )}{\inf_{t\in[t_0,T')}L(t)}, \quad t\in[t_0,T').
    \]
    Hence, if we choose $\varepsilon_2$ so small that 
    \[
    8\pi^2N_0^2\log(1+\varepsilon_2)\leq\frac{1}{2}\varepsilon_1,
    \]
    then by \eqref{eq:epsilon_small} we deduce that $K_{osc}\leq\frac{3}{2}\varepsilon_1$ holds on $[t_0,T')$, meaning that $T'=T$ due to the maximality of $T'$.
    Consequently,
    \[
    \sup_{t\in[0,T)}K_{osc}(t) \leq 2\varepsilon_1.
    \]
    In addition, this with the length estimate in \eqref{eq:epsilon_small} further implies that
    \[
    \sup_{t\in[0,T)}\|k\|_2^2= \sup_{t\in[0,T)}\frac{1}{L}(K_{osc}+4\pi^2N_0^2) <\infty,
    \]
    which together with \eqref{eq:blowup_rate} verifies $T=\infty$.
\end{proof}

We now prove the remaining exponential convergences in Theorem \ref{thm:main}.

\begin{proof}[Proof of Theorem \ref{thm:main}]    
    Lemma \ref{lem:main} already ensures \eqref{eq:main_1}, while Lemma \ref{lem:initial_exp_decay} implies \eqref{eq:main_2}, so we only address \eqref{eq:main_3}, \eqref{eq:main_4}, and \eqref{eq:main_5}.
    
    By Lemma \ref{lem:main}, for any small $\varepsilon>0$ we can find a large $t_\varepsilon>0$ that satisfies the assumption of Lemma \ref{lem:global_existence} with $\varepsilon_1:=\varepsilon$ and $t_0:=t_\varepsilon$.
    This particularly implies that $K_{osc}$ converges to $0$ as $t\to\infty$.
    
    Now we fix a large $t_*>0$ such that $2 - c_1K_{osc}-c_2N_0\sqrt{K_{osc}}\geq1$ on $[t_*,\infty)$.
    Then by using Lemma \ref{lem:K_osc_estimate} with \eqref{eq:PW} and $L(t)\leq L_0$, we deduce that, for $t\in [t_*,\infty)$,
    \begin{align}
       \frac{d}{dt}K_{osc} + \left(\frac{4\pi^2}{L_0^2}\right)^{m+1}K_{osc} \leq - 8\pi^2N_0^2 \frac{d}{dt}\log{L}.
    \end{align}
    Noting that $(\log L)'=\frac{1}{2L^2}D'\leq0$ and $L^2\geq 4\pi N_0A_0>0$, we further obtain
    \begin{align}
       \frac{d}{dt}K_{osc} + \left(\frac{4\pi^2}{L_0^2}\right)^{m+1}K_{osc} \leq -\frac{\pi N_0}{A_0}\frac{d}{dt}D.
    \end{align}
    Multiplying $e^{ct}$ with $c:=(2\pi/L_0)^{2m+2}$ and integrating over $[\frac{t}{2},t]\subset[t_*,\infty)$ yield
    \begin{align}
        e^{ct}K_{osc}(t) - e^{\frac{1}{2}ct}K_{osc}(\tfrac{t}{2}) &\leq \frac{\pi N_0}{A_0}\int_{\frac{t}{2}}^t e^{ct'}(-\tfrac{d}{dt}D(t'))dt'\\
        &\leq \frac{\pi N_0}{A_0}e^{ct}\int_{\frac{t}{2}}^\infty (-\tfrac{d}{dt}D(t'))dt'\\
        &= \frac{\pi N_0}{A_0}e^{ct}D(\tfrac{t}{2}),
    \end{align}
    where in the last part we also used $D(t)\to0$ as $t\to\infty$ (Lemma \ref{lem:main}).
    Further, multiplying $e^{-ct}$ and using the decay $D(\frac{t}{2})=o(e^{-\frac{1}{2}ct})$ (Lemma \ref{lem:initial_exp_decay}) and the boundedness of $K_{osc}$, we deduce that, even for all $t\geq0$,
    \begin{align}\label{eq:Kosc_exp_decay}
        K_{osc}(t) \leq C e^{-\frac{1}{2}ct},
    \end{align}
    where $C>0$ depends on $m$ and $\gamma$ but not $t$.
    Hereafter $C$ will change line by line.
    
    The decay of higher order terms $\|k_{s^j}\|_2$ now follows by standard interpolation techniques.
    Following the derivation of \cite[Equation (38)]{PW16} (again fixing minor typos as above), we obtain a higher order version of Lemma \ref{lem:K_osc_estimate} for $j\geq1$:
    \[
    \frac{d}{dt}\int k_{s^j}^2ds + \left(2-c_j(K_{osc}+N_0\sqrt{K_{osc}})\right) \int k_{s^{j+m+1}}^2 \leq \frac{8\pi^2N_0^2}{L^2}\int k_{s^{j+m}}^2ds,
    \]
    where $c_j$ depends only on $m$ and $j$, and will change line by line.
    Then for $k_{s^{j+m}}^2=P_{2}^{2j+2m,j+m+1}$ we apply Lemma \ref{lem:interpolation} and \eqref{eq:PW} to further estimate
    \begin{align*}
        \int k_{s^{j+m}}^2ds &\leq c_jL^{-2j-2m-1}(K_{osc})^{\frac{1}{2}+\frac{1}{j+m+1}}\left(L^{2j+2m+3}\int k_{s^{j+m+1}}^2ds\right)^{\frac{1}{2}-\frac{1}{j+m+1}}\\
        &\leq c_jL^{-2j-2m-1}(K_{osc})^{\frac{1}{2}}\left(L^{2j+2m+3}\int k_{s^{j+m+1}}^2ds\right)^{\frac{1}{2}}\\
        &\leq \frac{c_j}{L^{j+m-1}}\left( \delta\|k_{s^{j+m+1}}\|_2^2+\frac{1}{\delta}\|k-\bar{k}\|_2^2\right).
    \end{align*}
    Taking $\delta=(4\pi A_0N_0)^{\frac{j+m+1}{2}}(16\pi^2N_0^2c_j)^{-1}$ and using $L\geq\sqrt{4\pi A_0N_0}$, we obtain
    \[
    \frac{d}{dt}\int k_{s^j}^2ds + \left(\frac{3}{2}-c_j\left(K_{osc}+N_0\sqrt{K_{osc}}\right)\right) \int k_{s^{j+m+1}}^2 \leq C_jK_{osc},
    \]
    where $C_j>0$ also depends on $\gamma$ and will change line by line.
    Then, using the fact that $K_{osc}\to0$ as $t\to\infty$, as well as \eqref{eq:PW} and $L\leq L_0$, we deduce that there is $t_{*j}>0$ such that 
    \[
        \frac{d}{dt}\int k_{s^j}^2ds + \left(\frac{4\pi^2}{L_0^2}\right)^{m+1}\int k_{s^{j}}^2 \leq C_jK_{osc}, \quad t\in[t_{*j},\infty).
    \]
    By \eqref{eq:Kosc_exp_decay} the RHS is in particular bounded for $t\geq0$, which implies that $\|k_{s^j}\|_2^2$ is also bounded.
    Moreover, since the RHS is bounded by $C_je^{-\frac{1}{2}ct}$, multiplying $e^{ct}$ yields $(e^{ct}\|k_{s^j}\|_2^2)'\leq C_je^{\frac{1}{2}ct}$, and hence integrating over $[\frac{t}{2},t]\subset[t_{*j},\infty)$ gives
    \[
    e^{ct}\|k_{s^j}\|_2^2(t)-e^{\frac{1}{2}ct}\|k_{s^j}\|_2^2(\tfrac{t}{2}) \leq C_j e^{\frac{1}{2}ct}.
    \]
    Therefore, $\|k_{s^j}\|_2^2\leq C_je^{-\frac{1}{2}ct}$ holds for all $t\geq0$ and $j\geq1$.
    This combined with \eqref{eq:PW_infty} and $L\leq L_0$ yields \eqref{eq:main_3} and \eqref{eq:main_4}:
    \[
    \|k-\bar{k}\|_\infty^2\leq Ce^{-\frac{1}{2}ct}, \quad \|k_{s^j}\|_\infty^2\leq C_je^{-\frac{1}{2}ct} \quad(j\geq1).
    \]
    

    Finally, we estimate the center of mass of the curve $\gamma$.
    For any field $f$ let $\bar{f}:=\frac{1}{L}\int f ds$ denote the average.
    Using $F:=(-1)^{m+1}(k-\bar{k})_{s^{2m}}$, we compute
    \begin{align*}
        \frac{d}{dt}\bar{\gamma} &= -\frac{L'}{L^2}\int \gamma ds + \frac{1}{L}\int \gamma_t ds + \frac{1}{L}\int \gamma ds_t \\
        &=-\frac{\overline{Fk}}{L}\int \gamma ds - \overline{F\nu} + \frac{1}{L}\int \gamma Fkds\\
        &= \frac{1}{L}\int (Fk-\overline{Fk}) \gamma ds -\overline{F\nu}\\
        &= \frac{1}{L}\int (Fk-\overline{Fk}) (\gamma -\bar{\gamma}) ds -\overline{F\nu},
    \end{align*}
    and hence by using $\|\gamma-\bar{\gamma}\|_\infty \leq L$,
    \begin{align*}
        \left|\frac{d}{dt}\bar{\gamma}\right| \leq \|Fk-\overline{Fk}\|_\infty\|\gamma-\bar{\gamma}\|_\infty + \|F\|_\infty \leq 2\|F\|_\infty\|k\|_\infty L + \|F\|_\infty.
    \end{align*}
    Since the decay of $\|k-\bar{k}\|_\infty$ implies the boundedness of
    \[
    L\|k\|_\infty\leq L\|k-\bar{k}\|_\infty + L\bar{k} \leq L_0\|k-\bar{k}\|_\infty + 2\pi N_0,
    \]
    and since $\|F\|_\infty^2\leq Ce^{-\frac{1}{2}ct}$, we obtain $|\frac{d}{dt}\bar{\gamma}|\leq Ce^{-\frac{1}{4}ct}$.
    Therefore, there exists $P_\infty:=\lim_{t\to\infty}\bar{\gamma}(t)\in\R^2$ and it satisfies
    \[
    |\bar{\gamma}(t)-P_\infty|\leq \int_t^\infty\left|\frac{d}{dt}\bar{\gamma}(t')\right|dt'\leq Ce^{-\frac{1}{4}ct}.
    \]
    This shows \eqref{eq:main_5}, completing the proof.
\end{proof}

We close this subsection by briefly mentioning previous works.
As emphasized in the introduction, all the previous works are crucially based on the assumption $A_0\neq0$, which we will no longer mention below.

In the case of $m=0$ (with general $N_0\geq1$) and of $N_0=1$ (with general $m\geq0$), similar exponential convergence results are obtained by Nagasawa--Nakamura \cite{NN19,NN21} and Nakamura \cite{nakamura2019large}, respectively.
Their proof is different but also based on interpolation estimates.

For the $m=1$ case \eqref{eq:SDF} (with general $N_0\geq1$), Chou \cite{Chou03} sketched a proof of smooth convergence of immortal solutions.
This proof, combined with additional arguments for uniform bounds, is valid at least for a time subsequence $t_j\to\infty$.
Then Wheeler \cite{Wheeler22} discussed exponential convergence in the full limit $t\to\infty$, but unfortunately the proof contains several technical gaps, besides the point discussed in Remark \ref{rem:weakly_non-convex}.
[The author is announced by Wheeler that errata of \cite{Wheeler22} will be published, still assuming $A_0\neq0$.]

\subsection{Existence of immortal solutions}\label{subsec:existence_immportal}

In this final section we prove Theorem \ref{thm:rotationary_symmetric}.
This is now an easy consequence of already obtained results, together with the following key fact that the parametric isoperimetric inequality holds for any curve with suitable rotational symmetry.

\begin{theorem}[{\cite[Theorem 1.1]{MO21}}]\label{thm:isoper}
    Let $n\geq N\geq1$ and $\gamma\in A_{N,n}$. 
    Then
    \[
    L[\gamma]^2\geq 4\pi N[\gamma]A[\gamma].
    \]
\end{theorem}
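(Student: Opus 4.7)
My plan is to use a Fourier-series approach, in the spirit of Hurwitz's proof of the classical isoperimetric inequality: the rotational symmetry restricts the spectrum of $\gamma$ to a single residue class, and the target inequality then collapses to an elementary pointwise inequality on the indices.

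First I would reparametrize $\gamma:\S^1\to\R^2$ by constant speed, so that $|\gamma'(x)|\equiv L$ for $x\in\S^1=\R/\Z$. The symmetry $\gamma(x+1/n)=R_{2\pi N/n}\gamma(x)$ forces $|\gamma'|$ to be $(1/n)$-periodic, hence the constant-speed rescaling can be chosen to preserve the symmetry relation. Identifying $\R^2\cong\C$ and writing $\gamma(x)=\sum_{k\in\Z} c_k e^{2\pi i k x}$, the symmetry reads $c_k e^{2\pi i k/n}=c_k e^{2\pi i N/n}$ for every $k\in\Z$, which forces
\[
c_k=0 \quad\text{unless}\quad k\equiv N \pmod n.
\]

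Next I would express both sides in Fourier. A direct calculation (e.g.\ using $\operatorname{Im}(\bar\gamma\gamma')=x_1 x_2'-x_2 x_1'$) gives $A[\gamma]=\tfrac{1}{2}\operatorname{Im}\int_0^1 \bar\gamma\,\gamma'\,dx=\pi\sum_k k|c_k|^2$, while the constant-speed parametrization turns Cauchy--Schwarz into an equality, so that $L^2=\int_0^1|\gamma'|^2\,dx=4\pi^2\sum_k k^2|c_k|^2$. Subtracting,
\[
L[\gamma]^2-4\pi N A[\gamma]=4\pi^2\sum_{k\in\Z}k(k-N)|c_k|^2.
\]

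The remaining step, and the real content of the theorem, is to verify $k(k-N)\geq 0$ for every admissible index $k=N+jn$, $j\in\Z$. For $j=0$ the term vanishes; for $j\geq 1$ both factors are positive; for $j\leq -1$ the hypothesis $n\geq N$ gives $k=N+jn\leq N-n\leq 0$ and $k-N=jn<0$, so the product is again non-negative. This elementary case analysis is the only subtle point of the argument, and it exposes the sharpness of the assumption $n\geq N$: for $n<N$ the admissible index $k=N-n$ would be strictly positive while $k-N<0$, producing a negative contribution and breaking the inequality — consistent with the limaçon counterexample in Figure \ref{fig:limacon}.
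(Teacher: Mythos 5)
Your argument is correct and complete. Note first that the paper does not prove this statement at all: it is imported verbatim from \cite{MO21}, so there is no in-paper proof to compare against. Your Fourier (Hurwitz-style) proof stands on its own. The two points that actually require care are both handled properly: (a) the constant-speed reparametrization preserves the symmetry, because $|\gamma'|$ is $(1/n)$-periodic and hence the normalized arclength function $\phi$ satisfies $\phi(x+1/n)=\phi(x)+1/n$; and (b) the identity $L^2-4\pi NA=4\pi^2\sum_k k(k-N)|c_k|^2$ together with the spectral restriction $k\equiv N\pmod n$ reduces everything to $k(k-N)\geq0$ for $k=N+jn$, where the hypothesis $n\geq N\geq1$ is exactly what makes the $j\leq-1$ indices nonpositive. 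Your closing remark on sharpness is also apt: for $n<N$ the index $k=N-n>0$ contributes negatively, which is precisely the lima\c{c}on phenomenon ($N=2$, $n=1$). One small observation worth making explicit: the symmetry relation alone only determines $N$ modulo $n$, so the role of the condition $N[\gamma]=N$ in the definition of $A_{N,n}$ (together with $1\leq N\leq n$) is to fix the correct representative of the residue class appearing in the quadratic form; your case analysis implicitly uses this, and it is fine as written.
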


\begin{proof}[Proof of Theorem \ref{thm:rotationary_symmetric}]
    Notice that by assumption $N_0,A_0>0$.
    By using unique solvability we deduce that the property being an element of $A_{N_0,n}$ is preserved along the flow (cf.\ \cite[Appendix C]{MO21}).
    Then by Theorem \ref{thm:isoper} we deduce that
    \[
    \frac{L_0}{\inf_{t\in[0,T)}L(t)}\leq \frac{L_0}{\sqrt{4\pi N_0A_0}}\leq \sqrt{1+\varepsilon_2}.
    \]
    Hence, Lemma \ref{lem:global_existence} with $t_0=0$ implies $T=\infty$ if the small constants $\varepsilon_1,\varepsilon_2>0$ in the assumption are suitably chosen.
\end{proof}

Now we review and compare Theorem \ref{thm:rotationary_symmetric} with known results.
The situation is significantly different between the cases of $m=0$ and $m\geq1$, and also between $N_0=1$ and $N_0\geq2$.

We first address the $m=0$ case \eqref{eq:ACSF}.
This case has the particular property that the curvature satisfies a certain maximum principle, so that the local convexity (the positivity of $k$) is preserved along the flow.
For the solution curve itself, however, such a strong maximum principle does not hold, and indeed the embeddedness is not preserved \cite{MS00}.

For embedded curves, Gage \cite{Gage86} proved if the initial curve is convex, then the solution is immortal and converges to a circle (see also \cite{CLW13}).
Gage's result is recently extended to a nontrivial class of non-convex embedded curves \cite{Dittberner21,Dittberner2022distance,GP23}.
In general it is expected that finite time blowup occurs even for embedded initial curves \cite{Gage86,Mayer01}.
For immersed curves, Wang--Kong \cite{WK14} extended Gage's result to locally convex curves with $N_0\geq2$ and $A_0>0$ under rotational symmetry.
Non-convex initial curves with $N_0\geq2$ are newly addressed in Theorem \ref{thm:rotationary_symmetric} (besides trivial examples of multiply-covered embedded curves).

We then turn to the $m\geq1$ case, where equations \eqref{eq:m-ACF} are of higher order.
It is a common property of higher order flows to lose many positivity properties (including convexity) due to the lack of maximum principles \cite{GI98,GI99,MS00,EM01,Blatt10,MMR23}.

However, if $N_0=1$, then a circle is dynamically stable and many kinds of stability results are known.
More precisely, if the initial curve is close to a circle, then the solution is immortal and converges to a circle.
See \cite{EG97,EMS98,EscherMucha10,W13,LSS20,LeCroneSimonett20} for $m=1$, and \cite{PW16} for $m\geq1$.
In stark contrast, if $N_0\geq2$, then circles are generally unstable and small perturbations can lead to finite time blowup.
This issue can be overcome by assuming rotational symmetry, as suggested by Chou \cite{Chou03} and precisely discussed by the author and Okabe \cite{MO21}.
Theorem \ref{thm:rotationary_symmetric} shows that this idea extends to any order $m\geq0$, thus confirming the power of the parametric isoperimetric inequality in a wide class of area-preserving geometric flows.

Finally, we discuss some open problems, in addition to the well-known Gage--Mayer conjecture for \eqref{eq:ACSF} \cite{Gage86,Mayer01}, and Giga's and Chou's conjecture for \eqref{eq:SDF} \cite{Wheeler22}.
The first problem is about nontrivial immortal solutions.
Theorem \ref{thm:rotationary_symmetric} requires almost circularity, but we expect that there also exist immortal solutions that are initially far from being circular, even under embeddedness.

\begin{conjecture}
    Let $m\geq1$.
    For any given parameter $L_0\geq \sqrt{4\pi}$, there is an embedded initial curve $\gamma_0$ of length $L_0$ and enclosed area $1$ such that the solution to \eqref{eq:m-ACF} starting from $\gamma_0$ is immortal.
\end{conjecture}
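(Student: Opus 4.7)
The plan is a topological continuation argument, combining Theorem \ref{thm:rotationary_symmetric} as a seed with the perturbation mechanism provided by Lemma \ref{lem:global_existence} and the exponential convergence of Theorem \ref{thm:main}.

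First, I would verify the base of the continuation: at $L_0=\sqrt{4\pi}$ the unit circle itself is embedded, stationary, and has area one. For $L_0$ slightly larger, Theorem \ref{thm:rotationary_symmetric} applies with $N_0=1$ (where the rotational symmetry hypothesis $\gamma_0\in A_{1,n}$ is automatic), and small smooth perturbations of the unit circle with prescribed length $L_0$ and area one remain embedded and satisfy the $K_{osc}$- and $L_0^2/(4\pi A_0)$-smallness assumptions. Hence the set
\[
\mathcal{L}:=\{L_0\geq\sqrt{4\pi}:\exists \text{ embedded } \gamma_0 \text{ with } L[\gamma_0]=L_0,\ A[\gamma_0]=1,\ T=\infty\}
\]
contains a right-neighborhood $[\sqrt{4\pi},\sqrt{4\pi}+\delta)$.

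Second, I would show that $\mathcal{L}$ is open. If $L_0^*\in\mathcal{L}$ is realized by $\gamma_0^*$, then Theorem \ref{thm:main} gives smooth exponential convergence to a unit circle, so for $t_0$ sufficiently large one has $K_{osc}(t_0)\leq\tfrac{1}{2}\bar\varepsilon_1$ and $L(t_0)$ arbitrarily close to $\sqrt{4\pi}$. Smooth dependence on initial data on $[0,t_0]$ transfers the $K_{osc}(t_0)$-smallness and the embeddedness to any nearby initial curve, and Lemma \ref{lem:global_existence} applied with the new starting time $t_0$ yields immortality for all such perturbations. A small rescaling to restore $A_0=1$ then realizes every length in a neighborhood of $L_0^*$ inside $\mathcal{L}$.

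The main obstacle is the third step: showing $\mathcal{L}=[\sqrt{4\pi},\infty)$, which requires either closedness of $\mathcal{L}$ or a direct construction of embedded immortal data of arbitrary length. Closedness is delicate because along a sequence $L_0^{(j)}\to L_0^*$ of immortal embedded data the limiting initial curve may lose embeddedness through pinching of thin necks, or the exponential decay rates may degenerate in the limit. Crucially, the rotational-symmetry mechanism exploited in Theorem \ref{thm:rotationary_symmetric} is unavailable here, since embedded closed planar curves necessarily have $N_0=1$ and one cannot trade long length for high rotation number. A direct construction—say a dumbbell initial curve with long thin controlled necks—immediately confronts the absence of a maximum principle to prevent those necks from pinching, which is the same fundamental obstruction underlying the open Gage--Mayer conjecture for \eqref{eq:ACSF}. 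A successful attack will likely require a new monotone quantity, a refined isoperimetric-type estimate tailored to embedded curves, or a variational/topological construction adapted to the higher-order setting of \eqref{eq:m-ACF}.
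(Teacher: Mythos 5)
This statement is posed in the paper as a \emph{conjecture}: the paper offers no proof, and indeed explicitly remarks that for $m\geq1$ ``we require a substantially new technique that departs from the nearly circular regime.'' So there is no proof of the paper's to compare yours against, and your proposal does not close the gap either --- as you yourself acknowledge, the third step is missing, and that step is the entire content of the conjecture. Your first two steps are essentially a repackaging of what the paper already proves: the base case is Theorem \ref{thm:rotationary_symmetric} with $N_0=1$ (where $A_{1,1}$ is indeed the class of all rotation-number-one curves, so the symmetry hypothesis is vacuous), and the openness step is Theorem \ref{thm:main} plus Lemma \ref{lem:global_existence} with continuous dependence on initial data; both are plausible, modulo the detail that after flowing to time $t_0$ you must still verify the length condition $L(t_0)\leq(1+\varepsilon_2)\inf_{t\geq t_0}L(t)$ for the \emph{perturbed} solution, which here follows from Theorem \ref{thm:isoper} with $N=n=1$ (so $L(t)^2\geq 4\pi A_0$ a priori), and the detail that your two-parameter perturbation family must realize a full interval of lengths at fixed area $1$.

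The reason the gap cannot be filled by the paper's machinery is structural, and worth stating more sharply than you do. An open subset of $[\sqrt{4\pi},\infty)$ containing a right-neighborhood of the endpoint need not be the whole ray without closedness, and closedness is exactly where every known mechanism breaks: the only immortality criterion available (Lemma \ref{lem:global_existence}, hence Theorem \ref{thm:rotationary_symmetric}) requires $K_{osc}$ small, which for $N_0=1$ forces the curve to be nearly round and hence forces $L_0^2/(4\pi A_0)$ near $1$; at fixed area $1$ this confines you to $L_0$ near $\sqrt{4\pi}$ and can never reach large $L_0$. Embeddedness pins $N_0=1$, so the rotational-symmetry escape route of \cite{MO21} is unavailable, and for $m\geq1$ there is no maximum principle to propagate convexity or control neck-pinching of a long thin initial curve --- in contrast to $m=0$, where Gage's convexity theory supplies thin ellipses of every length. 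Your diagnosis of the obstruction is correct, but the proposal proves nothing beyond what Theorem \ref{thm:rotationary_symmetric} already gives.
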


Note that in the $m=0$ case \eqref{eq:ACSF}, this property is indeed true thanks to Gage's general theory for convex curves \cite{Gage86} together with the fact that there is a continuous family of ellipses interpolating a round circle and an arbitrarily thin ellipse.
Multiple covers of such a family give explicit examples for Remark \ref{rem:blowup_parameter}.
On the other hand, for $m\geq1$, the flow does not preserve convexity and we require a substantially new technique that departs from the nearly circular regime.

Next we consider blowup solutions.
Although immortal solutions need to become circular, the circles are unstable for $N_0\geq2$.
Hence it would be natural to formulate the following

\begin{conjecture}
    Let $m\geq0$ and $N_0\geq2$.
    Then generic initial data lead to finite-time blowup solutions to \eqref{eq:m-ACF}.
\end{conjecture}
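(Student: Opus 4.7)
The plan is to interpret ``generic'' as Baire-residual in a suitable Fr\'echet space $X$ of smooth immersed closed curves of rotation number $N_0$, and to prove that the set $\mathcal{I}\subset X$ of initial data yielding immortal solutions of \eqref{eq:m-ACF} is meager, so that its complement (finite-time blowup data) is residual.

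By Corollary~\ref{cor:blowup} it suffices to restrict to the open subset $X_+:=\{\gamma_0\in X:A[\gamma_0]>0\}$. Theorem~\ref{thm:main} shows that every $\gamma_0\in \mathcal{I}\cap X_+$ is driven exponentially to some $N_0$-fold circle $\mathcal{C}^*_P$ of area $A_0$ centered at $P\in\R^2$, so $\mathcal{I}\cap X_+\subset \bigcup_{P\in\R^2}W^s(\mathcal{C}^*_P)$. Linearizing \eqref{eq:m-ACF} at such a $\mathcal{C}^*$ of length $L_0=2\sqrt{\pi N_0 A_0}$ in normal graph coordinates $\rho$, using the standard identity $\delta k=\rho_{ss}+k_0^2\rho$ with $k_0=2\pi N_0/L_0$, and noting that $\delta\bar{k}$ is supported on the $j=0$ Fourier mode (which is itself constrained to zero by area preservation), the linearized semigroup diagonalizes on the basis $\rho=\sum_j\rho_j e^{2\pi ijx}$ with eigenvalues
\[
    \lambda_j=(2\pi/L_0)^{2m+2}\,j^{2m}(N_0^2-j^2),\qquad j\neq 0.
\]
Hence $\lambda_j>0$ precisely for $|j|\in\{1,\ldots,N_0-1\}$, providing $2(N_0-1)\geq 2$ real unstable dimensions when $N_0\geq 2$; the modes $|j|=N_0$ form the neutral 2-dimensional translation kernel, and $|j|>N_0$ is exponentially stable.

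With this spectral picture, I would apply a center-stable manifold theorem at each $\mathcal{C}^*$---in the quasilinear case $m\geq 1$ via the maximal-regularity / implicit-function framework of Da Prato--Lunardi--Simonett (as used for \eqref{eq:SDF} in \cite{EMS98,LSS20} and extendable to higher order)---to produce a local $C^1$ manifold $W^{cs}(\mathcal{C}^*)\subset X_+$ of codimension $2(N_0-1)\geq 2$. Translation equivariance of the flow together with the neutrality of the $|j|=N_0$ modes ensures that $W^{cs}(\mathcal{C}^*)$ already contains $W^s(\mathcal{C}^*_P)$ for every $P$ near the center of $\mathcal{C}^*$. Combined with the exponential smooth convergence of Theorem~\ref{thm:main}, every $\gamma_0\in \mathcal{I}\cap X_+$ satisfies $\Phi_T(\gamma_0)\in W^{cs}(\mathcal{C}^*)$ for some rational $T\geq 0$ and some $\mathcal{C}^*$ drawn from a countable lattice of centers $P\in\R^2$ (which covers all $P$ by the translation-invariant size of the local manifolds). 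Since parabolic smoothing and uniqueness make $\Phi_T$ a $C^1$-injection on uniformly bounded orbits, each $\Phi_T^{-1}(W^{cs}(\mathcal{C}^*))$ is a locally closed $C^1$ submanifold of codimension $\geq 2$, and $\mathcal{I}\cap X_+$ is a countable union of such nowhere-dense sets, hence meager by the Baire category theorem.

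The hardest step is the construction of the invariant manifold in the nonlocal case $m=0$: \eqref{eq:ACSF} is second-order quasilinear but contains the nonlocal zero-order term $\bar{k}=2\pi N_0/L$, so standard local-in-space center-manifold machinery must be adapted---for instance by treating $\bar k$ as a bounded rank-one perturbation of the local second-order linearization, or by enlarging the phase space to include $L$ (or $\bar k$) as a scalar dynamical variable evolving alongside the shape. A secondary technical issue is the backward propagation under $\Phi_T$ preserving positive codimension, which requires smoothing-plus-injectivity of the semigroup on bounded orbits; this is classical for $m\geq 1$ but again needs care in the quasilinear nonlocal case $m=0$.
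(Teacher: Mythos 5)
You should first be clear that this statement appears in the paper as a \emph{conjecture} that is explicitly left open; the paper contains no proof of it, so there is no argument of the author's to compare yours against. What you have written is the natural research program rather than a proof: the immortal set is contained in the union of the stable sets of the unstable $N_0$-fold circles, and one wants to show this union is meager because each stable set lies in an invariant manifold of positive codimension. Your spectral computation is the expected one --- on the $j$-th Fourier mode of a normal perturbation of an $N_0$-fold circle of length $L_0$ the linearization acts by $(2\pi/L_0)^{2m+2}j^{2m}(N_0^2-j^2)$, giving $2(N_0-1)\geq 2$ unstable directions for $N_0\geq 2$, consistent with the instability discussed in the paper and in \cite{EMS98,MO21}. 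But essentially everything after that is asserted rather than established, and the unestablished steps are exactly where the difficulty of the conjecture lies.

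Concretely: (i) the construction of a center-stable manifold of the stated codimension is not off-the-shelf even for $m\geq1$, not only for the nonlocal case $m=0$ that you flag. The equation is geometric: Theorem~\ref{thm:main} gives convergence only \emph{up to reparametrization}, the phase space $X$ carries an infinite-dimensional reparametrization action, and ``codimension $2(N_0-1)$'' is meaningful only after fixing a gauge (normal graphs over a reference circle whose center and radius must themselves be treated as parameters, since the limit circle depends on $\gamma_0$ through $A_0$ and $P_\infty$). (ii) The meagerness step is the weakest link: $\Phi_T$ is smoothing and far from surjective, so $\Phi_T^{-1}(W^{cs}(\mathcal{C}^*))$ is not a $C^1$ submanifold of $X$ in any straightforward sense, and ``finite codimension'' alone does not imply nowhere density in a Fr\'echet space --- one needs the sets to be contained in closed graphs over closed complemented subspaces, which is precisely the content that would have to be proved; the standard route is a forward-in-time argument (every immortal orbit eventually enters the local manifold and stays there), not backward preimages. (iii) The reduction to a countable union must cover limit circles ranging over both centers $P\in\R^2$ and areas $A_0>0$, with non-uniform entry times, and this needs an actual uniformity/compactness argument. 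None of these steps is obviously false, but none is carried out, so the proposal should be regarded as a plausible outline of an open problem, not a proof.
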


Here the term ``generic'' means for example that for any given curve there is an arbitrary small (smooth) perturbation from which the solution blows up.

\bibliography{SDF}

\end{document}